\def\l@subsection{\@tocline{2}{0pt}{2.5pc}{5pc}{}}
\renewcommand\tocchapter[3]{%
  \indentlabel{\@ifnotempty{#2}{\ignorespaces#2.\quad}}#3%
}
\newcommand\@dotsep{4.5}
\def\@tocline#1#2#3#4#5#6#7{\relax
  \ifnum #1>\c@tocdepth 
  \else
    \par \addpenalty\@secpenalty\addvspace{#2}%
    \begingroup \hyphenpenalty\@M
    \@ifempty{#4}{%
      \@tempdima\csname r@tocindent\number#1\endcsname\relax
    }{%
      \@tempdima#4\relax
    }%
    \parindent\z@ \leftskip#3\relax \advance\leftskip\@tempdima\relax
    \rightskip\@pnumwidth plus1em \parfillskip-\@pnumwidth
    #5\leavevmode\hskip-\@tempdima{#6}\nobreak
    \leaders\hbox{$\m@th\mkern \@dotsep mu\hbox{.}\mkern \@dotsep mu$}\hfill
    \nobreak
    \hbox to\@pnumwidth{\@tocpagenum{#7}}\par
    \nobreak
    \endgroup
  \fi}
\renewcommand\csname r@tocindent0\endcsname{0pt}
\def\l@subsection{\@tocline{2}{0pt}{2.5pc}{5pc}{}}
\newtheorem{hypothesis}{Hypothesis}
\newtheorem{lemma}{Lemma}[section]
\newtheorem{proposition}{Proposition}[section]
\newtheorem{condition}{Condition}[section]
\newtheorem{conditionbis}{Condition}[section]
\newtheorem{corollary}{Corollary}[section]
\renewcommand\theconditionbis{\Roman{conditionbis}}
\newtheorem{theorem}{Theorem}[section]
\newtheorem{remark}{Remark}[section]
\newtheorem{conjecture}{Conjecture}[section]
\newcommand{\PP}{\mathbb{P}}
\newcommand{\EE}{\mathbb{E}}
\newcommand{\ZZ}{\mathbb{Z}}
\newcommand{\FF}{\mathcal{F}}
\newcommand{\Rr}{\mathcal{R}}
\newcommand{\CC}{\mathcal{C}}
\newcommand{\Rs}{\mathbb{R}}
\newcommand{\um}{\mathds{1}}
\newcommand{\Name}{{\rm GERW}}
\newcommand{\Nametwo}{{\rm ERW}}
\newcommand{\com}[1]{\textcolor{red}{\texttt{giulio:} #1}}
\newcommand{\comu}[1]{\textcolor{cyan}{\texttt{Glauco:} #1}}
\renewcommand\thetable{\thesection.\@arabic\c@table}
\title[excited random walk under Bernoulli excitations]{
Generalized Excited Random Walks under Bernoulli excitations}
\author[R. Alves, G. Iacobelli, G. Valle, L. Zuaznabár]{Rodrigo B. Alves$^1$, Giulio Iacobelli$^2$,\\ Glauco Valle$^3$, Leonel Zuazn\'abar$^4$ }
\thanks{1. Supported by CAPES}
\thanks{2. Supported by FAPERJ  grant E-26/210.516/2024.}
\thanks{3. Supported by CNPq grant 307938/2022-0 and FAPERJ grant E-26/202.636/2019.}
\address{
\newline
\newline
Instituto de Matemática e Estatística, Universidade Estadual do Rio de Janeiro.
\newline Caixa postal 20550900, Rio de Janeiro, Brasil
\newline
$^1$ e-mail: {\rm \texttt{rodrigo.alves@ime.uerj.br}}
}
\address{
\newline
\newline
Instituto de Matem\'atica, Universidade Federal do Rio de Janeiro.
\newline  Caixa Postal 68530, Rio de Janeiro, RJ, Brasil
\newline
$^2$ e-mail: {\rm \texttt{giulio@im.ufrj.br}}
\newline
$^3$ e-mail: {\rm \texttt{glauco.valle@im.ufrj.br}}
}
\address{
\newline
\newline
Centro de Matem\'atica, Computa\c{c}\~ao e Cogni\c{c}\~ao, Universidade Federal do ABC.
\newline Santo André, SP, Brasil.
\newline
$^4$ e-mail: {\rm \texttt{l.zuaznabar@ufabc.edu.br}}
}
\subjclass[2020]{60K37}
\keywords{excited random walks, non-Markovian processes,  ballisticity, law of large numbers,  central limit theorem}
\begin{document}

\maketitle

\begin{abstract}

We study a variant of the Generalized Excited Random Walk (\Name) on $\mathbb{Z}^d$ introduced by Menshikov et al. in \cite{menshikov2012general},  
where excitation may or may not occur according to a time-dependent probability.  Specifically, given a sequence $\{p_n\}_{n \ge 1}$, 
if at time $n-1$ the process visits a site for the first time, then with probability $p_{n}$ it gains a drift in a fixed direction. Otherwise, it behaves as a $d$-dimensional martingale with zero-mean vector. We refer to the model as
$p_n$-\Name{}. 

Assuming bounded jumps  and 
$p_n \approx n^{-\beta}$,  
we show a series of results for the $p_n$-\Name{}  depending on the value of $\beta$ and on the dimension $d$. Specifically, for every $\beta\in(0,1]$ and $d=2$ or $d \ge h(\beta)$, with $h$ a decreasing function of $\beta$, we prove a SLLN for the range, while for $\beta<1/2$ we prove a sub-ballistic SLLN for the process whenever the SLLN for the range holds. We also study the $p_n$-\Name{} under diffusive scaling and we obtain a Functional Central Limit Theorem for $\beta > 1/2$ and $d\geq 2$, or  $\beta=1/2$ and $d=2$. Finally,  for  $\beta=1/2$  and $d \ge 11$ we show that the diffusively rescaled $p_n$-\Name{} converges in distribution to a Brownian Motion plus a multiple of the square root of time.
\end{abstract}


\tableofcontents


\setcounter{tocdepth}{2}
\section{Introduction}

The multi-dimensional Generalized Excited Random Walk (\Name) was introduced by Menshikov et al. in~\cite{menshikov2012general} following a series of works on multi-dimensional excited random walk \cite{benjamini2003excited,kozma2003excited,berard2007central}. The model considered in ~\cite{menshikov2012general} is a  uniformly elliptic, self-interacting,  random walk with bounded jumps in dimension $d\ge 2$ which behaves as follows:   on already visited sites it behaves as a $d$-dimensional martingale with bounded jumps and zero-mean vector, whereas whenever a site is visited for the first time its increment has a drift in a fixed direction $\ell$ of the unit sphere in $\mathbb{R}^d$. They show that the GERW with a drift condition in direction $\ell$ is ballistic in that direction. Besides that, they proved a Law of Large Numbers and a Central Limit Theorem (both for dimensions $d \geq 2$) under stronger hypotheses on the definition of GERW; these particular models were called \textit{excited random walk in random environment}. 

 The GERW is an important class of toy models of non-Markovian random walks used to understand how weakly, or rarely, the random walk needs to be pushed in a fixed direction to still exhibit a ballistic behavior. In \cite{alves2022note} we have studied a variation of the GERW  considering that the strength of the drift on first visits decreases with time. Specifically, we assume that the drift in a fixed direction $\ell$ at time $n$, say $\lambda_n$, is of order $n^{-\beta}$ (if at this time a site is visited for the first time). We called this variation  $\lambda_n$-\Name{}. If $\{X_n\}_{n\geq 0}$ denotes this $\lambda_n$-\Name{}, we show in \cite{alves2022note} that $\lim_{n \rightarrow \infty} X_n \cdot \ell = \infty$ (directional transience) with positive probability if $\beta$ is sufficiently small. This shows that the $\lambda_n$-\Name{}  has an intermediary behavior between a zero-mean random walk (non-excited) and the \Name{} considered in \cite{menshikov2012general}. Indeed, for  $\lambda_n$ of order  $n^{-\beta}$, the  $\lambda_n$-\Name{} is not ballistic, since the total mean drift accumulated by time $n$ is bounded by  $n^{1-\beta}$.
 The $\lambda_n$-\Name{} is well motivated since for the multi-dimensional excited random walk a relevant question is if it is still possible to guarantee properties such as directional transience and ballisticity by reducing the number of times it has a drift in a given direction.
 
In this paper, we discuss limit theorems 
for a particular class of $\lambda_n$-\Name{}, in the same spirit as the {\em excited random walk in random environment} discussed in 
\cite{menshikov2012general} (and mentioned above). Let us point out that the model considered in this paper is neither a generalization nor a particular case of the excited random walk under Bernoulli excitations. Specifically, given a sequence of parameters $\{p_n\}_{n \geq 1}$ with $p_n \in (0,1]$ for all $n \geq 1$, if at time $n-1$ the process visits a site for the first time, it becomes excited with probability $p_n$ (thus gaining a drift). Otherwise, with probability $1-p_n$, it does not gain drift and behaves as a $d$-dimensional martingale with zero-mean vector. If, instead, the process has already visited the site, there is no excitation and the process acts again as a $d$-dimensional martingale with zero-mean vector. We call this model $p_n$-\Name{}. 
We will consider $p_n = \mathcal{C}n^{-\beta} \wedge 1$ with $\beta >0$ and $\mathcal{C}$ a positive constant.  Depending on the value of $\beta$ and on the dimension $d$, we obtain the following results:   
\begin{itemize}
    \item For $\beta \in (0,1]$ and $d=2$ or $d \ge \max\{9, \lfloor 2 + 4/\beta \rfloor + 1\}$, we prove a Strong Law of Large Numbers for the range of a $p_n$-\Nametwo{} (see, Theorem~\ref{thm:SLLN}). For $\beta>1$, the SLLN for the range  holds for $d\geq 2$. The $p_n$-\Nametwo{} is a special type of $p_n$-\Name{} (see,  Section~\ref{sec:p_n-ERW} for its definition).
    \item For $\beta <1/2$ and $d\geq2$, if the SLLN for the range holds,  we prove a sub-ballistic Strong Law of Large Numbers for the $p_n$-\Nametwo{} (see, Theorem~\ref{thm:conv-Xbeta}). 
    \item For $\beta$ sufficiently small and  $d\geq 2$,   the $p_n$-\Name{} has a positive probability of never returning to the origin in a fixed  direction $\ell$. This is proved in \cite{alves2022note}.
    \item For  $\beta > 1/2$ and $d\geq 2$,  the $p_n$-\Name{}, under diffusive  rescaling,  converges in distribution to a Gaussian Process (see, Proposition~\ref{pn-WGERW-Gauss}). 
    \item For $\beta = 1/2$ and $d \ge 3$, if the SLLN for the range holds,     the $p_n$-\Nametwo{},  under diffusive scaling,  converges in distribution  to a Brownian Motion plus a multiple of square root of time  (see, Theorem~\ref{thm:main-conv}).
\end{itemize}

For $\beta\in (0,1]$ and $d < \max\{9, \lfloor 2 + 4/\beta \rfloor + 1\}$, we conjecture that the SLLN for the range of the $p_n$-\Nametwo{} should hold, but in this case we only provide an upper bound.  As a consequence, we can show that any limit point of the $p_n$-\Nametwo{} under diffusive scaling is stochastic bounded from above in the direction of the drift by a Brownian Motion plus a multiple of the square root of time.


\medskip 
The paper is organized as follows: In Section~\ref{sec:model} we formally define the $p_n$-\Name{},  as well as the $p_n$-\Nametwo{},  and state the main theorems concerning the different asymptotic behaviors  depending on the model ($p_n$-\Name{} or $p_n$-\Nametwo{}), the value of $\beta$ and the dimension $d$. Section~\ref{sec:rangeERW}  is devoted to the proof of the SLLN for the range. 
In Section~\ref{sec:SLLN_X} we provide the proof of the sub-ballistic SLLN for the $p_n$-\Nametwo{}. 
Finally, Section~\ref{resultados_pn} contains the proofs for the  convergence in distribution under diffusive rescaling. 

\section{Formal definitions and main results}\label{sec:model}
Let $d \ge 2$ be the fixed dimension, $\{\xi_i, \FF_i\}_{i \geq 1}$ be the increments of a $d$-martingale  
(i.e., a $d$ dimensional process where each component is a martingale) 
with zero-mean vector and $\{\gamma_i, \FF_i\}_{i\geq 1}$ be a $\ZZ^d$ random vector,  both adapted processes on a stochastic basis $(\Omega,\mathcal{F},\PP,\{ \mathcal{F}_n \}_{n \geq 0})$ where $\FF_0$ contains all the $\PP$-null sets of $\FF$. 
We denote the expectation with respect to $\PP$ by $\mathbb{E}$ and the Euclidean norm in $\mathbb{R}^d$ by $||\cdot||$. Let $\ell \in \mathbb{S}^{d-1}$ be a direction, where $\mathbb{S}^{d-1}$ is the unit sphere of $\mathbb{R}^d$. We assume the following conditions:

\begin{condition}\label{condition_A}
There exists a positive constant $K$ such that
\[
\sup_{n \geq 1} || \xi_n || \le K \quad \text{  and } \quad  \sup_{n \ge 1}||\gamma_n|| \le K\;,
\]
on every realization.    
\end{condition}

\begin{condition}\label{condition_B}
For every $n \ge 1$,  we have that 
\[
\EE[\xi_n| \FF_{n-1}] = 0 \quad  \text{ and } \quad \EE[\gamma_n \cdot \ell | \FF_{n-1}] \ge \lambda\;,
\]
 where $\lambda$ is a positive constant.    
\end{condition}
 
Let $\{U_i\}_{i \geq 1}$ be a sequence of i.i.d. random variables with uniform distribution in $[0,1]$ independent of the $\{\xi_i\}_{i\geq 1}$ and $\{\gamma_i\}_{i\geq 1}$, and $\{p_n\}_{n \ge 1}$ be a sequence  such that $p_n \in (0, 1]$ for all $n \ge 1$. 
The $p_n$-\Name{} is a $\ZZ^d$-valued process $X=\{X_n\}_{n \ge 0}$.  
Setting   $E_0:= \emptyset$ and  $X_0 = 0$,  we define $\{X_n\}_{n \geq 1}$ recursively as follows: given  $\{X_{k}\}_{k=0}^{n-1}$, denote $E_{n-1}:=\{ \exists\;  k < n-1 \; \text{ such that }\;  X_k = X_{n-1} \}$, i.e.,  $E_{n-1}$ corresponds to the event that the process $\{X_n\}_{n \geq 0}$ is, at time $n-1$,  in an already visited site. Then, we define $X_n$ as   
\begin{align*}
X_n := X_{n-1} +  \um_{E_{n-1}} \xi_n + \um_{E_{n-1}^c \cap \{U_{n} > p_{n}\}}  \xi_n + \um_{E_{n-1}^c \cap \{ U_{n} \leq p_{n}\}} \gamma_n  \,, 
\end{align*}
where $\um_E$ denotes the indicator function of set $E$. Note that the process can be rewritten as 
\begin{align}\label{xn-incremnto1}
X_n := \sum_{i=1}^n \big( \um_{E_{i-1}} \xi_i + \um_{E_{i-1}^c \cap \{U_{i} > p_{i}\}}  \xi_i + \um_{E_{i-1}^c \cap \{ U_{i} \leq p_{i}\}} \gamma_i \big) , \ n\ge 1 \,.
\end{align}

The $p_n$-\Name{} can be interpreted as follows: if at time $n-1$ the process visits a site for the first time, it becomes excited with probability $p_{n}$ (thus gaining a drift). Otherwise, with probability $1-p_{n}$, it does not become excited (no drift) and behaves as a $d$-martingale with zero-mean vector. If the process has already visited the site, it is not excited and the process acts again as a $d$-martingale with zero-mean vector.

\subsubsection*{\bf A special case of $p_n$-\Name{} ($p_n$-\Nametwo{})}\label{sec:p_n-ERW}  
A particular type of $p_n$-\Name{}, which we call $p_n$-\Nametwo{} in the direction $\ell$, is obtained if we further assume that  $\{\xi_i\}_{i \ge 1}$ is an i.i.d. sequence on $\ZZ^d$ with zero-mean vector and finite covariance matrix. Additionally, we assume $\PP[ \xi_i \cdot \ell' = 0 ] < 1$ for all $i \ge 0$ and for all $\ell'\in \mathbb{S}^{d-1}$ (i.e., the increments $\xi_i$ are genuinely  $d$-dimensional). Moreover, we assume that the sequence $\{\gamma_i\}_{i \ge 1}$ is also i.i.d. on $\ZZ^d$  (recall that $\gamma_i$ satisfies  $\EE[\gamma_i \cdot \ell |\FF_{i-1}] \ge \lambda$), 
and that the sequences $\{\xi_i\}_{i \ge 1}$ and $\{\gamma_i\}_{i \ge 1}$ are independent.
 
\medskip
\noindent
{\it Example of $p_n$-\Nametwo{}:}  
Here we provide a concrete example of a $p_n$-\Nametwo{} on $\ZZ^d$ which may be thought of as a generalization of the classical ERW~\cite{benjamini2003excited}. Specifically, it evolves as the classical ERW but when a site is visited for the first time, it  gains a drift with probability $p_n$.   
Let $\ell = e_1$ (drift direction) and let $\{\xi_i\}_{i \ge 1}$ and $\{\gamma_i\}_{i \ge 1}$ be i.i.d. sequences of discrete random variables taking values on $\{\pm e_1, \ldots, \pm e_d\}$ (canonical directions).  We denote by $q_{\xi}$ and $q_{\gamma}$, the probability distributions associated   with $\{\xi_i\}_{i \ge 1}$ and $\{\gamma_i\}_{i \ge 1}$, respectively. 
We assume that $q_{\xi}(e_j)=q_{\xi}(-e_j)=\frac{1}{2d}$, for all $j \in \{1, \ldots, d\}$. As regards to $q_{\gamma}$, fixing $\delta \in (\frac{1}{2},1]$, we assume that  $q_{\gamma}(e_1)= \frac{\delta}{d}$, $q_{\gamma}(-e_1)=\frac{1-\delta}{d}$ and  $q_{\gamma}(e_j)=q_{\gamma}(-e_j)=\frac{1}{2d}$, for all $j \in \{2, \ldots, d\}$. Note that, with the above choice, we obtain that $\EE[\gamma_i \cdot e_1 |\FF_{i-1}]= \EE[\gamma_1 \cdot e_1] = 
\frac{2\delta -1}{d}=:\lambda >0$ (the first equality follows from the i.i.d. hypothesis and the fact that the sequences $\{\gamma_i\}_{i \ge 1}$ and $\{\xi_i\}_{i \ge 1}$ are independent). 
The corresponding  $p_n$-\Nametwo{} (see, \eqref{xn-incremnto1}) is a process $\{X_n\}_{n \geq 0}$ on $\ZZ^d$ with transition probabilities 
\begin{equation*}
\begin{split}
\PP & \Big[X_{n+1} = x + e_i  \Big\vert X_n = x, \sum_{j=0}^{n-1} \um_{\{X_
j = x\}} = 0 \Big] = 
\\
& = \um_{\{U_n \leq p_n\}} q_{\gamma}(e_i) +   \um_{\{U_n > p_n\}} q_{\xi}(e_i) \,, 
\end{split}
\end{equation*}
and for every $m \in \{1,2,\dots, n-1\}$ we have 
\begin{equation*}
\PP \Big[X_{n+1} = x + e_i  \Big\vert X_n = x, \sum_{j=0}^{n-1}\um_{\{X_
j = x\}} = m \Big] = q_{\xi}(e_i) \,, 
\end{equation*}
Note that, when $p_n =1$ for all $n \ge 1$, we recover the classical ERW.
\hfill $\diamond$

\medskip
\noindent
{\it Relation between  $p_n$-\Name{} and the $\lambda_n$-\Name{} introduced in~ \cite{alves2022note}.}  
We show here that a $p_n$-\Name{} can be thought of as a special type of a $\lambda_n$-\Name{} if we further impose a uniform elliptic condition.
As can be seen in \cite{alves2022note}, the $\lambda_n$-\Name{} is also defined in terms of three conditions (along the lines of \cite{menshikov2012general}), which are summarised below: 
\begin{enumerate}[$i)$]
    \item $\exists K>0$ such that $\sup_{n \geq 0} || X_{n+1} - X_{n} || < K$ on every realization; 
    \item almost surely, on $\{ X_k \neq X_n \, \forall \; k < n \}$, $\mathbb{E} [ X_{n+1} - X_n | \mathcal{F}_n] \cdot \ell  \geq \lambda_n$, and 
    \\
    on $\{ \exists\,  k < n  \text{ such that }  X_k = X_n \}$,
 $\mathbb{E} [ X_{n+1} - X_n | \mathcal{F}_n] = 0$;
 \item  There exist $h, r > 0$ such that: 
 for all $n$, 
 \\ $\PP \left[ \left( X_{n+1} - X_n \right) \cdot \ell > r | \mathcal{F}_n \right] \geq h$, a.s., and 
 \\
 on the event $ \{ \mathbb{E} [ X_{n+1} - X_n | \mathcal{F}_n] = 0 \}$,  for all $\ell' \in \mathbb{S}^{d-1}$,  with $|| \ell '|| = 1$,
$\PP \left[ \left( X_{n+1} - X_n \right) \cdot \ell ' > r | \mathcal{F}_n \right] \geq h$, a.s..
\end{enumerate}
Now if $\{X_n\}_{n \ge 0}$ is a $p_n$-\Name{} (see, \eqref{xn-incremnto1}), Condition~\ref{condition_A}  implies $i)$. As far as $ii)$ is concerned, if we denote by $\widetilde{\FF}_n=\sigma(X_1, \ldots, X_n)$, by integrating 
$\mathbb{E} [ X_{n+1} - X_n | \mathcal{F}_n] \cdot \ell$ with respect to $U_1, \ldots, U_{n+1}$ and using Condition~\ref{condition_B}, we obtain $\mathbb{E} [ X_{n+1} - X_n | \widetilde{\FF}_n] \cdot \ell\geq p_{n+1}\lambda$.  Thus $ii)$  is satisfied with $\lambda_n = p_{n+1}\lambda$.
Condition $iii)$ is a uniform elliptic condition which we do not assume for $p_n$-\Name{}; thus in order to compare a $p_n$-\Name{} with a $\lambda_n$-\Name{}, this condition must be further imposed. 
\begin{remark}
Note that if $\{X_n\}_{n \ge 0}$ is a $p_n$-\Name{} with $p_{n+1} = p$ where $p \in (0, 1]$ for all $n \ge 1$ (homogeneous) which also satisfies the uniform elliptic condition $iii)$, we obtain the~\Name 
~defined in~\cite{menshikov2012general}. Thus, from the result in~\cite{menshikov2012general}, we obtain that the homogeneous $p_n$-\Name{} satisfying $iii)$ is ballistic  in the direction of the drift. 
\end{remark} 

\medskip

In this paper we study the $p_n$-\Name{} when the sequence $\{p_n\}_{n \ge 1}$ is of the form $p_n =\mathcal{C} n^{-\beta} \wedge 1$, with $\beta>0$  and $\mathcal{C}$ is a positive constant.
 When $\beta>1/2$, we may relax the bounded jump assumption of  Condition~\ref{condition_A};   namely, we may just assume the following condition: 
 \renewcommand{\theconditionbis}{\Roman{conditionbis}*}
\setcounter{conditionbis}{0}
\begin{conditionbis}\label{condiçaoI*}
For all $k \geq 1$ and $\theta < \beta - 1/2$, where $\beta> 1/2$, we have
    \[\sup_{k \geq 1} \frac{\EE[\| \xi_k \|]}{k^{\theta}} < \infty  \quad \text{ and } \quad
    \sup_{k \geq 1} \frac{\EE[\| \gamma_k \|]}{k^{\theta}} < \infty \;. \]
\end{conditionbis}
If a process $X$ (defined as in~\eqref{xn-incremnto1}) has associated sequences $\{\gamma_i\}_{ i\geq 1}$ and $\{\xi_i\}_{i \geq 1}$ which  satisfy  Condition~\ref{condiçaoI*}, and~\ref{condition_B},  we call $X$ a $p_n$-\Name*.

\subsection{Main results}\label{main_pn}


For $X$ a $p_n$-\Nametwo{} we defined its range  as 
\[
\Rr^X_{n}:=\{X_0, X_1, \ldots, X_n\} \subset \mathbb{Z}^d\,,
\]
 i.e.,  the set of sites visited by  $X$ up to time $n$. 

We study the range of a $p_n$-\Nametwo{} when  $p_n= \mathcal{C}n^{-\beta} \wedge 1$ and $d\geq 2$.  
Observe that for $\beta >1$,  the $p_n$-\Nametwo{} will eventually behave as a random walk with i.i.d. increments $\{\xi_i\}_{i\geq 1}$. In this case, it is well-known that $\lim_{n \to \infty}|\Rr^X_{n}|/n =\pi_d$  almost surely, where  $\pi_d$ denotes the probability that the random walk with increments $\{\xi_i\}_{i\geq 1}$ never returns to the origin (see, e.g., \cite{spitzer2001principles}).  The interesting question is how the range of a $p_n$-\Nametwo{} behaves for $\beta \leq 1$, i.e., when the probability of the random walk getting excited on first visits is non-summable.  

Below we present two results that together provide a Strong Law of Large Numbers (SLLN) for $\Rr^X_{n}$ when the dimension is sufficiently large, depending on the value of $\beta \in (0,1]$. The first result is a tight upper bound that works in any dimension $d\geq2$. The second is a tight lower bound that only works when $d \ge h(\beta)$, where $h$ is a decreasing function of $\beta$, that is, as $\beta$ approaches zero, the required dimension increases. We believe this is a limitation of the proof technique employed rather than the actual behaviour of the process. 
%
\begin{proposition}\label{prop:RangeERW} 
Let  $X$ be a $p_n$-\Nametwo{} in direction $\ell$ on $\ZZ^d$ with $p_n= \mathcal{C} n^{-\beta} \wedge 1$. Then, for every $\beta >0$ and $d\geq 2$ it holds that
\begin{align*}
    \limsup_{n \to \infty} \frac{|\Rr_n^X|}{n} \le \pi_d \,, \quad  
    \text{ a.s.}\,.
\end{align*}
\end{proposition}

%
%

\begin{hypothesis}[Lower Bound for the range of the $p_n$-\Nametwo{}]\label{hyp} Let  $X$ be a $p_n$-\Nametwo{} in direction $\ell \in \mathbb{S}^{d-1}$ on $\ZZ^d$ with $d\geq2$ and $p_n= \mathcal{C}n^{-\beta} \wedge 1$ and $\beta >0$. We say that $X$ satisfies Hypothesis~\ref{hyp} if 
\begin{align*}
\liminf_{n \to \infty} \frac{|\Rr_n^X|}{n} \ge \pi_d \,, \quad  \text{ a.s.}\,.
\end{align*}
\end{hypothesis}

Note that for $d=2$, we have  $\pi_d=0$, whereas for $d\geq 3 $, $\pi_d\in (0,1]$. Thus, Hypothesis~\ref{hyp} clearly holds for $d=2$. 

\begin{proposition}\label{prop:RangeERW_lower} 
Let $X$ a $p_n$-\Nametwo{} in direction $\ell \in \mathbb{S}^{d-1}$ on $\ZZ^d$ with $p_n= \mathcal{C}n^{-\beta} \wedge 1$ and $d\geq 3$. Then, for every $\beta \in (0,1]$, $X$ satisfies Hypothesis~\ref{hyp} if $d \ge \max\{9, \lfloor 2 + 4/\beta \rfloor + 1\}$. 
\end{proposition}

Combining  Proposition~\ref{prop:RangeERW} and~\ref{prop:RangeERW_lower}, we   obtain the following result.
\begin{theorem}[SLLN for the range]\label{thm:SLLN}
Let $X$ be a $p_n$-\Nametwo{} in direction $\ell \in \mathbb{S}^{d-1}$ on $\ZZ^d$ with $p_n= \mathcal{C}n^{-\beta} \wedge 1$. Then, for every $\beta \in (0,1]$ and $d=2$ or $d\ge h(\beta):=\max\{9, \lfloor 2 + 4/\beta \rfloor + 1\}$, it holds that 
\begin{align*}
\lim_{n \to \infty} \frac{|\Rr_n^X|}{n} = \pi_d \,, \quad  \text{ a.s.}\,.
\end{align*}
\end{theorem}

For example, applying the above theorem, we have that for $\beta=1$ the SLLN for the range holds for  $d \ge 9$, whereas for $\beta=1/2$ it holds for $d \ge 11$. 
We conjecture that the SLLN for the range when $\beta\in (0,1]$ holds for any $d\geq 3$.  Despite the dimensional gap, we believe that Theorem~\ref{thm:SLLN} is a strong  result since the absence of the Markov property in our model makes the control of the range a hard problem to tackle.

Our next result is a sub-ballistic SLLN for the $p_n$-\Nametwo{}.

\begin{theorem}\label{thm:conv-Xbeta} Let $X$ be a $p_n$-\Nametwo{} in direction $\ell \in \mathbb{S}^{d-1}$ on $\ZZ^d$, $d\geq 2$, $p_n= \mathcal{C}n^{-\beta} \wedge 1$ and $\beta \in (0,1/2)$. 
Assume that $X$ satisfies Hypothesis~\ref{hyp}, then 
$$
\lim_{n\to \infty} \frac{X_n}{n^{1 - \beta}} = \mathcal{C}\frac{\pi_d \EE[\gamma_1]}{1-\beta} \,, \quad \text{ a.s.}\,.
$$    
\end{theorem} 

We now present two  results related to the behavior of $p_n$-\Name{} under diffusive rescaling.
Before we state them let us introduce some notation. Define 
\begin{equation}\label{eq:B}
\Hat{B}_t^n := \frac{X_{\lfloor nt \rfloor}}{n^{1/2}} + (nt - \lfloor nt \rfloor)\frac{(X_{\lfloor nt \rfloor + 1} - X_{\lfloor nt \rfloor})}{n^{1/2}} \,, \ t\ge 0 \,,
\end{equation}
where the process $X$ will make reference to a specified process ($p_n$-\Name, $p_n$-\Name*{} or $p_n$-\Nametwo{}) in each result; we shall refer to  $\Hat{B}_t^n$ as the corresponding rescaled process.

Let $C_{\Rs^d}[0,T]$ be the space of continuous functions from $[0,T]$ to $\mathbb{R}^d$ for every $T > 0$. We consider $C_{\Rs^d}[0,T]$ endowed with the uniform topology. Denote by $C_{\Rs^d}[0, \infty)$  the space of continuous functions from $[0,\infty)$ to $\mathbb{R}^d$  endowed  with the metric
\begin{equation}\label{def:rho}
 \rho(f, g) := \sum_{k=1}^{\infty}\frac{1}{2^k} \sup_{0 \le t \le k}(||f(t) - g(t)|| \wedge 1) \,,  \ f, \, g \in  C_{\Rs^d}[0, \infty) \,.
\end{equation}

Our first result is a Central Limit Theorem for the $p_n$-\Name* when  $\beta>1/2$ and $d\geq 2$. Henceforth,  for a column vector $a \in \mathbb{R}^d$, we denote by $a^T$ its transpose.

\begin{proposition}\label{pn-WGERW-Gauss} 
Let $X$ be a $p_n$-\Name* in direction $\ell\in \mathbb{S}^{d-1}$ on $\ZZ^d$, $d \ge 2$,  $p_n= \mathcal{C}n^{-\beta} \wedge 1$ and $\beta > 1/2$. 
Suppose that 
    \begin{equation*}
    \lim_{k \to \infty} k^{-1/2} \EE\Big[ \sup_{1 \leq i \leq k} \| \xi_i \| \Big]  = 0 \,, \quad 
    \end{equation*}
    and that there exists $C=((c_{i,j}))$ a continuous $d \times d$ matrix-valued function on $[0,\infty)$ satisfying $C(0) = 0$  and 
\begin{equation*}
\sum_{i,j = 1}^d (c_{i,j}(t) - c_{i,j}(s))\alpha_i \alpha_j \geq 0 \quad \text{for any } \alpha \in \mathbb{R}^d, \quad t > s \geq 0\,,
\end{equation*}
such that
\begin{equation}\label{condGaussiano}
    \frac{1}{n}\sum_{i=1}^{\lfloor nt \rfloor} \xi_i \xi_i^T \xrightarrow[n \to \infty]{} C(t)  \ \textrm{ in probability}\,.
    \end{equation}
Then $\{\Hat{B}_{\cdot}^n\}_{n\geq 1}$ converges in distribution to a process with independent Gaussian increments with sample paths in $C_{\mathbb{R}^d}[0, \infty)$.
\end{proposition}

From Proposition~\ref{pn-WGERW-Gauss} we obtain the following corollary.

\begin{corollary}\label{pnESRW->BM}
Let $X$ be a $p_n$-\Nametwo{} in direction $\ell\in \mathbb{S}^{d-1}$ on $\ZZ^d$,  $d \ge 2$, $p_n= Cn^{-\beta} \wedge 1$ and  $\beta > 1/2$. 
Then $\{\Hat{B}_{\cdot}^n\}_{n\geq 1}$ converges in distribution to a $d$-dimensional Brownian Motion in $C_{\Rs^d}[0,\infty)$ with covariance matrix $\Sigma=\mathbb{E}[\xi_1\xi_1^T]$. 
\end{corollary}

\begin{remark}
Note that  the $p_n$-\Name* when $\beta>1$ eventually behaves as a $d$-dimensional martingale and, in this case, the above proposition (essentially) reduces to  \cite[Theorem 7.1.4]{ethier2009markov}. In light of that, Proposition~\ref{pn-WGERW-Gauss} is a simple and not too surprising result which we state and prove for completeness. 
The main and more technical result in the context of diffusive rescaling is for the case $\beta=1/2$.
\end{remark}




We now state our main result for $\beta=1/2$.  
\begin{theorem}\label{thm:main-conv}
Let  $X$ be a $p_n$-\Nametwo{} in direction $\ell \in \mathbb{S}^{d-1}$ on $\ZZ^d$,  $d \ge 2$ and   $p_n= \mathcal{C} n^{-1/2} \wedge 1$. 
Assume that  $X$ satisfies Hypothesis~\ref{hyp}, then  $\{\Hat{B}_{\cdot}^n\}_{n\geq 1}$ converges in distribution to 
$\big\{W_{t} + 2 \mathcal{C} \pi_d \EE[\gamma_1]\sqrt{t}\big\}_{t\geq 0}$,
where $W_{\cdot}$ is a Brownian Motion.
\end{theorem}


 Since $\pi_d=0$ and Hypothesis~\ref{hyp} holds for $d=2$ and $\beta=1/2$, we obtain the following corollary. 
 \begin{corollary}\label{pn-ERW-d=2} 
Let $X$ be a two dimensional $p_n$-\Nametwo{} in direction $\ell$ on $\ZZ^2$ with $p_n= \mathcal{C} n^{-1/2} \wedge 1$. 
Then $\{\Hat{B}_{\cdot}^n\}_{n\geq 1}$ converges in distribution to a $2$-dimensional Brownian Motion in $C_{\Rs^2}[0, \infty)$.
\end{corollary}

\begin{remark}
Note that Corollary~\ref{pnESRW->BM} and~\ref{pn-ERW-d=2} tell us that for $d= 2$ the $p_n$-\Nametwo{}, with $p_n= \mathcal{C} n^{-\beta} \wedge 1$ and $\beta\geq 1/2$  under  diffusive scaling 
converges in distribution to a Brownian Motion and has no ballisticity. Together with 
Theorem \ref{thm:conv-Xbeta}, absence of ballisticity holds in dimension $d=2$ for any $\beta > 0$. This is a distinct behavior from the usual \Nametwo{} which is ballistic (see~\cite{benjamini2003excited}, \cite{kozma2003excited} and \cite{kozma2005excited}).
Note also that the behavior of the $p_n$-\Nametwo{}, with $p_n= \mathcal{C} n^{-1/2} \wedge 1$ and $d>2$ is different from the case of a random walk with i.i.d. increments, as indicated by the presence of the additive term  $2 \pi_d \mathcal{C}\EE[\gamma_1]\sqrt{t}$ in Theorem~\ref{thm:main-conv}.  
\end{remark}

Observe that Theorem~\ref{thm:main-conv} and  Proposition~\ref{prop:RangeERW_lower} imply that  for $d=2$ or $d \ge \max\{9, \lfloor 2 + 4/\beta \rfloor + 1\}$ the 
$p_n$-\Nametwo{} under diffusive scaling converges in distribution to a Brownian Motion plus a correction function given by $2 \mathcal{C} \pi_d \EE[\gamma_1] \sqrt{t}$.
This dimensional gap is due to the fact that Hypothesis~\ref{hyp} was proved only for $d=2$ or $d \ge \max\{9, \lfloor 2 + 4/\beta \rfloor + 1\}$(Proposition~\ref{prop:RangeERW_lower}).
If we drop Hypothesis~\ref{hyp} we obtain the following weaker result but with no dimensional gap: Let $X$ be a $p_n$-\Nametwo{} in direction $\ell \in \mathbb{S}^{d-1}$ on $\ZZ^d$, $d \ge 2$ and $p_n= \mathcal{C} n^{-1/2} \wedge 1$, then $\{\Hat{B}_{\cdot}^n\}_{n\ge 1}$ is tight in $C_{\Rs^d}[0, \infty)$ and for any of its limit points $\mathcal{Y}_{\cdot}$ it holds that
\begin{align}\label{prop:d2-23}
\{\mathcal{Y}_t \cdot \ell\}_{t\ge 0} \preceq \left\{W_t \cdot \ell + 2 \mathcal{C}\pi_d \EE[\gamma_1 \cdot \ell] \sqrt{t}\right\}_{t\ge 0} \, ,
\end{align}
where $W_{\cdot}$ is a Brownian Motion and   $\preceq$ denotes stochastic domination.
The proof follows from Theorem \ref{thm:main-conv} by a straightforward argument, using only the upper bound in Proposition \ref{prop:RangeERW} (this is rather easy to verify; see also Remarks \ref{main-ub1} and \ref{main-ub2}). 

\medskip

Table~\ref{table:1} provides a summary of the main results concerning the $p_n$-\Name{}, with $p_n=\mathcal{C}n^{-\beta} \wedge 1$ for different values of $\beta$ and dimension $d$. 

\renewcommand\thetable{\thesection.\arabic{table}}

\begin{table}[!h]    
\caption{Overview of the main results. 
}
\label{table:1}
\begin{tabular}{
|p{0.155\textwidth}|p{0.26\textwidth}
|p{0.48\textwidth}|}
\hline 
    $\displaystyle p_{n}$-\Nametwo{} & $\displaystyle \beta >0$, $\displaystyle d\geq 2$  &  {\it \small $    \limsup_{n \to \infty} |\Rr_n^X|/n \le \pi_d \ \text{ a.s.}$.} 
 \\
 \hline 
    $\displaystyle p_{n}$-\Nametwo{} & 
$\displaystyle \beta \in (0,1]$, $\displaystyle d \ge h(\beta) $  
      &  {\it \small $    \liminf_{n \to \infty} |\Rr_n^X|/n \ge \pi_d \ \text{ a.s.}$ (Hyp~\ref{hyp}).} 
 \\
 \hline 
  $\displaystyle p_{n}$-\Nametwo{} & $\displaystyle \beta <1/2$, Hyp.~\ref{hyp}&  {\it \small Sub-ballistic Strong Law of Large Numbers.} 
 \\
 \hline 
 $\displaystyle p_{n}$-GERW* & $\displaystyle \beta   > 1/2$, $\displaystyle d\geq 2$ &   {\it \small Convergence in distribution to a Gaussian Process (under diffusive scaling).} \\
\hline 
 $\displaystyle p_{n}$-\Nametwo{} & $\displaystyle \beta \geq 1/2$, $\displaystyle d=2$ &  {\it \small Convergence in distribution to a Brownian Motion (diffusive scaling).} 
 \\
 \hline 
$\displaystyle p_{n}$-ERW & $\displaystyle \beta =1/2$, Hyp.~\ref{hyp} &  {\it \small  Convergence in distribution to a Brownian Motion plus a multiple of square root of time (diffusive scaling).}
 \\
 \hline 
$\displaystyle p_{n}$-ERW & $\displaystyle \beta <1/2$, Hyp.~\ref{hyp} & {\it \small  Convergence in distribution to a deterministic function multiple of square root of time ($n^{1-\beta}$ scaling).
}
 \\
 \hline
  $\displaystyle p_{n}$-GERW & $\displaystyle \beta$ small, $\displaystyle d\geq 2$ &   {\it \small Directional transience (see  \cite{alves2022note}).}\\
 \hline
\end{tabular}
\end{table}

\section{On the range of $p_n$-\Nametwo{}}\label{sec:rangeERW}

We begin by introducing some notation and recalling a well-known large deviation principle for the range of a random walk with i.i.d. increments.

Here $\{\xi_i\}_{i \ge 1}$ is a sequence of i.i.d. $\ZZ^d$-valued random variables with zero-mean vector and finite covariance matrix. Let $\{Y_n\}_{n \ge 0}$ be the random walk on $\ZZ^d$ with increments $\{\xi_i\}_{i \ge 1}$ starting at $Y_0 = 0$, thus $Y_n = \sum_{i=1}^{n} \xi_i$, $n\ge 1$.  For $m \leq n$ define 
\[ \Rr_{[m,n]} ^Y := \{Y_m, Y_{m+1}, \ldots, Y_n\}\;,\]
and denote by $\Rr^Y_{n} = \Rr^Y_{[0,n]}$,  the range of the random walk $\{Y_n\}_{n \geq 0}$. Recall 
that $\pi_d$ denotes the probability that $\{Y_n\}_{n \geq 0}$ never returns to the origin. 

\begin{theorem}[{\cite[Theorem 1]{hamana2001large}}]\label{teo: RnZ>}
Let $\{Y_n\}_{n \geq 0}$ be a genuinely  $d$-dimensional random walk on $\ZZ^d$, with $d\geq 2$. It holds that 
\begin{align*}
\tag{L}&\lim_{n \to \infty} \PP[|\Rr_{n}^Y| \geq \theta n]= 1\,, &&  \text{for every $\theta < \pi_d$}\,, 
\\
\tag{U}&\PP[|\Rr_{n}^Y| \geq \theta' n] \leq e^{-c_{\theta'}n}\,,    && \text{for every  $\theta' > \pi_d$ and $n$ sufficiently large}\,, 
\end{align*}
where $c_{\theta'}$ is a positive constant that depends of $\theta'$.
\end{theorem}

\begin{remark}
 Note  that~\cite[Theorem 1]{hamana2001large} is stated for ``aperiodic''  random walks (see,  (\cite[Condition (1.9)]{hamana2001large}). 
However,  as explained in~\cite[page 188-Section 2]{hamana2001large} the  result also holds for any genuinely $d$-dimensional random walk. 
\end{remark}

\subsection{Upper bound for the range (proof of  Proposition~\ref{prop:RangeERW})}

\hfill \\

The heuristic of the proof of Proposition~~\ref{prop:RangeERW} is the following: 
suppose that the $p_n$-\Nametwo{} at time $j$ visits a new site and gets excited. Then, after $k$ further steps it visits another new site and gets excited again and  no excitation occurs between time $j+1$ and $j+k-1$. Thus,  we know that between time $j+2$ and $j+k$ the process evolves as a random walk with i.i.d. increments. Specifically, in each of these time windows between two consecutive excitations, we can use the range of the random walk with i.i.d. increments to  upper bound the range of the $p_n$-\Nametwo{}.


\begin{proof}[Proof of Proposition~\ref{prop:RangeERW}]


For simplicity, we set $\mathcal{C} = 1$, and the general case follows analogously. The proof for $\beta>1$ is straightforward, since from a simple Borel-Cantelli argument it follows that the number of excitations is finite almost surely. So let us proceed assuming $\beta \in (0,1]$.

Denote by $\{N_i\}_{i \geq 0}$ the sequence of stopping times 
\[ N_0 \equiv 0 \ \ \mathrm{and} \ \ N_i := \inf\{ k > N_{i-1}: Z_k^{\beta} = 1 \} \, , \ i\ge 1 \,, \]
where $Z_k^{\beta} = \um_{\{U_k \le k^{{-\beta}} \}}$, $k \geq 1$, are independent random variables with Bernoulli distribution of parameter $k^{{-\beta}}$, respectively.
Set $\Delta N_i = N_i - N_{i-1} $ and define 
\[ M_n := \inf \Big\{ i \geq 1 : \sum_{j=1}^i \Delta N_j \geq n \Big\} \,. \]

Note that $|\Rr_n ^X| - 1  = \sum_{t=1}^n \um_{\{X_t \neq X_l, \forall l < t \}}$ is bounded from above by 
\begin{align*}
 &  \sum_{t=1}^{N_1} \um_{\{X_t \neq X_l, \forall l < t \}} + \sum_{t=N_1 +1}^{N_2} \um_{\{X_t \neq X_l, \forall l < t \}} + \dots + \sum_{t=N_{M_n -1} +1}^{N_{M_n}} \um_{\{X_t \neq X_l, \forall l < t \}}
 \\
 & \leq M_n + \sum_{j = 1}^{M_n} \sum_{t = N_{j-1} + 2}^{N_j} \um_{\{X_t \neq X_l, \forall l < t \}} \\
 &\leq M_n + \sum_{j = 1}^{M_n} \sum_{t = N_{j-1} + 2}^{N_j} \um_{\{X_t \neq X_l, \forall l \in [N_{j-1} + 1, t) \}} \,.
\end{align*}
In each time interval $[N_{j-1} + 2, N_j]$ the process $X$ behaves like a random walk with i.i.d. increments. Note that $N_{j-1}+ 1$ is not accounted in the interval, since $Z^\beta_{N_{j-1}}=1$, and thus if the process at that time is visiting a position for the first time, it gets excited.

To have some control on the length of these intervals, or equivalently on $\{\Delta N_j\}_{j \geq 1}$, we proceed as follows: Let $\varepsilon \in (0, 1)$ and note that 
\begin{equation}\label{eq: Rn<}
|\Rr_n ^X| \leq |\Rr_{[0,n^{\varepsilon}]} ^X| + |\Rr_{[n^{\varepsilon}, n]} ^X| \leq n^{\varepsilon} + |\Rr_{[n^{\varepsilon}, n]} ^X|\; , 
\end{equation}
this last step will be necessary to guarantee that after time $n^\varepsilon$ the time intervals $\Delta N_j$ are (in distribution) sufficiently large. Thus, we may just redefine $N_0 \equiv n^{\varepsilon}$ and apply the very same decomposition as before to obtain 
\begin{align} \label{pR_DNj}
|\Rr_{[n^{\varepsilon}, n]} ^X| -1 &\leq
 M_n + \sum_{j = 1}^{M_n} \sum_{t = N_{j-1} + 2}^{N_j} \um_{\{X_t \neq X_l, \forall l \in [N_{j-1}+1, t) \}} \nonumber
 \\ 
 & \leq  M_n + \sum_{j = 1}^{M_n} |\Rr_{[N_{j-1} + 2, N_j]} ^Y| \,. 
\end{align}
%
We pointed out that to deal properly with the rightmost sum in~\eqref{pR_DNj}, we need to keep in mind that $Y$ is independent of  $\{N_j\}_{j\ge 1}$.
Now, for any $k \in \{1, 2, \dots, n \}$ fixed, we define  the  random set 
\[
A_{n,k} \coloneqq \{ j \in \{1, 2, \dots, M_n \} : \Delta N_j \leq k \}\,,
\] 
and we write 
\begin{align*}
 \sum_{j = 1}^{M_n} |\Rr_{[N_{j-1} + 2, N_j]}^Y| & = \sum_{j \in A_{n,k}} |\Rr_{[N_{j-1} + 2, N_j]}^Y| + \sum_{j \in A_{n,k}^{c}} |\Rr_{[N_{j-1} + 2, N_j]}^Y| 
\\
& \leq (k-1)|A_{n,k}| + \sum_{j \in A_{n,k}^c} |\Rr_{[N_{j-1} + 2, N_j]}^Y| \;. 
\end{align*}
Using the simple inequality
$$
M_n = |A_{n,k}| + |A_{n,k}^c| \le |A_{n,k}| + \frac{M_n}{k} \le |A_{n,k}| + \frac{n}{k}\,,
$$
and \eqref{pR_DNj}, we obtain that 
\begin{equation}\label{R_DNj}
|\Rr_{[n^{\varepsilon}, n]} ^X| \le \Big(1 +  \frac{n}{k}\Big) + k |A_{n,k}| + \sum_{j \in A_{n,k}^c} |\Rr_{[N_{j-1} + 2, N_j]}^Y|\,.
\end{equation}
To control the right-hand side of \eqref{R_DNj}, we begin with estimates on $|A_{n,k}|$. Let $\{G_j\}_{j =1}^{n}$ be a sequence of i.i.d. geometric random variables with parameter $n^{-\varepsilon{\beta}}$. By a coupling argument, redefining $N_0 = n^{\epsilon}$, we obtain  $\sum_{j=1}^nG_j \preceq \sum_{j=1}^n\Delta N_j$, 
where $\preceq$ denotes stochastic dominance.  

Since $|A_{n,k}| = \sum_{j=1}^{M_n} \um_{\{\Delta N_j \leq k \}}$, for every $a>0$ and $n$ sufficiently large, it holds that
\begin{align*}
\PP\Big[  |A_{n,k}|   > a \Big]  &\leq  \PP\Big[  \sum_{j=1}^{n} \um_{\{\Delta N_j \leq k \}}  > a \Big]\\
 &\leq \binom{n}{\lceil a \rceil} \Big( 1 - \Big( 1 - \frac{1}{n^{\varepsilon{\beta}}} \Big)^k \Big)^{\lceil a \rceil}\,  
\\
& \leq \Big( \frac{ne}{\lceil a \rceil} \Big)^a \Big( 1 - \Big( 1 - \frac{1}{n^{\varepsilon{ \beta}}} \Big)^k \Big)^{\lceil a \rceil}\\
&
 \leq \Big( \frac{ne}{a} \Big)^{\lceil a \rceil} \Big( 1 - \exp{-\frac{3}{2}\frac{k} {n^{\varepsilon{\beta}}}} \Big)^{\lceil a \rceil} 
 \\&\leq \Big( \frac{ne}{a} \times \frac{3k}{2n^{\varepsilon{\beta}}} \Big)^{\lceil a \rceil} \,,
\end{align*}
where in the last inequalities we used that 
$\left(1-\frac{1}{x}\right)^x\geq e^{-3/2}$, $\forall x\geq 2$ (with $n$ sufficiently large) and that $1-e^{-x}\leq x$. 
Setting   $a= n^{1-\varepsilon \beta/2}$ we obtain that 
\begin{equation}\label{eq:1_DNj<k}
\PP\Big[  |A_{n,k}|  > n^{1-\varepsilon {\beta/2}} \Big] 
{ \leq \Big( \frac{3ek}{2n^{\frac{\varepsilon \beta}{2}}} \Big)^{n^{1-\varepsilon \beta/2}}} \,.  
\end{equation}

We now set $k = \lceil \log^2 (n) \rceil$. With this choice, the deterministic first term in \eqref{R_DNj} divided by $n$ converges to zero.  Moreover, the sum in $n$ of the probabilities of the events $\{ |A_{n, \lceil \log^2 (n) \rceil}|  > n^{1-\varepsilon {\beta/2}} \}$ is finite by \eqref{eq:1_DNj<k}, { for every $\beta \in (0,1]$}. Thus, by Borel-Cantelli,  the second term in \eqref{R_DNj} divided by $n$ converges to zero almost surely.

Now we are left with the  analysis of the third term in~\eqref{R_DNj}. 
First,  we observe that for all $n\geq 2 $
\begin{equation}\label{eq: R_DNj>y3}
|A_{n, \lceil \log^2 (n) \rceil}^c| \leq  \frac{n}{\lceil \log^2 (n) \rceil} \,.    
\end{equation}

By Theorem~\ref{teo: RnZ>},  for all $i> \lceil \log^2(n) \rceil$ (with   $n$ sufficiently large) and for all $\gamma \in (\pi_d,1]$, it holds that 
\begin{align}\label{eq: RYi}
\begin{split}
\PP[|\Rr_{i-2}^Y| > \gamma i] &\leq  \PP[|\Rr_{i-2}^Y| > \gamma (i-2)]
 \leq \exp(-c_{\gamma}(i-2)) 
\\
& \leq \exp(-c_{\gamma}(\lceil \log^2(n) \rceil-2)) \,.
\end{split}
\end{align}
Recalling that for all $j \in A_{n, \lceil \log^2 (n) \rceil}^c$ it holds that $\Delta N_j > \lceil \log^2 (n) \rceil$,   by~\eqref{eq: R_DNj>y3} and~\eqref{eq: RYi} we obtain that
\begin{align}\label{eq: gDNj}
\begin{split}
\PP & \left[\exists j \in A_{n, \lceil \log^2 (n) \rceil}^c : |\Rr_{[N_{j-1} + 2, N_j]}^Y| > \gamma \Delta N_j\right] 
 \leq \frac{n \exp\left(-c_{\gamma} \lceil \log^2 (n) -2\rceil \right)}{\lceil \log^2 (n) \rceil}  
\\
& \leq \exp\left( \log\left( \frac{n}{\lceil \log^2 (n) \rceil} \right) - c_{\gamma} \lceil \log^2 (n)-2 \rceil \right) \,.
\end{split}
\end{align}
Since it holds that
\begin{align*}
&\left\{ \exists j \in A_{n, \lceil \log^2 (n) \rceil}^c : |\Rr_{[N_{j-1} + 2, N_j]}^Y| > \gamma \Delta N_j\right\} \\
&\supseteq
\Big\{\sum_{j \in A_{n,\lceil \log^2 (n) \rceil}^c} |\Rr_{[N_{j-1} + 2, N_j]}^Y| > \gamma n\Big\}\,,
\end{align*}
then this last event has probability bounded above by the rightmost term in \eqref{eq: gDNj}
which is summable.
Thus, by Borell-Cantelli Lemma, the third term in~\eqref{R_DNj} divided by $n$ is bigger than $\gamma$ only finitely many times almost surely.
Hence,  letting $\gamma\downarrow \pi_d$ completes the proof. 
\end{proof}

\subsection{Lower bound for the range (proof of Proposition~\ref{prop:RangeERW_lower})}
\hfill \\

Throughout this section, we set $\mathcal{C} = 1$ for simplicity, as the general case follows analogously. As for Proposition~\ref{prop:RangeERW}, the proof for $\beta>1$ is straightforward. Let $\beta \in (0,1]$ and for $\ell \geq 1$ set $Z_\ell^\beta=\um_{\{U_\ell \leq  \ell^{-\beta}\}}$. 
Given $k\geq 1$ and $\delta \in (0,1)$, let us define the following set:
\begin{align}\label{eq:set_A}
A_{k,\delta, \beta} := \left\{\sum_{\ell = k}^{k + \lfloor k^\delta \rfloor}Z_\ell^\beta=0\right\}\,,
\end{align}
i.e.,  the event that in the first $\lfloor k^\delta \rfloor$ steps after time $k$ none of the corresponding Bernoulli are successful. In particular, the occurrence of the  event $A_{k,\delta, \beta}$ implies that the random walk does not get excited in the time window from $k$ to $k + \lfloor k^\delta \rfloor$. 

Before proving Proposition~\ref{prop:RangeERW_lower} we state a couple of auxiliary results. 
\begin{lemma}\label{lemma_1}  
Given { $\beta \in (0,1]$}  consider the sequence of events $\{A_{k, \delta,\beta}\}_{k\geq 1}$ defined in \eqref{eq:set_A}. Then, { for every $\delta \in (0, \beta)$}  it holds that 
\begin{equation*}			
\lim_{n \to \infty} \frac{1}{n} \sum_{k = 1}^n \um_{A_{k,\delta, \beta}^c} = 0 \,, \quad  \text{ a.s.}\,.
\end{equation*}

\end{lemma}


The proof of Lemma~\ref{lemma_1} is given in  Appendix~\ref{sec:appendixC}.

\medskip

Given a $\mathbb{Z}^d$-valued process $\{S_n\}_{n\ge 1}$, we set
$$
e_k^S := \um_{\{ S_m \neq S_k \text{ for all } m > k \}} \,.
$$
Let $\mathcal{D}_n^S$ denote  the set of sites visited by the process $S$ up to time $n$  which are never revisited later. Then,  
\begin{equation*}
  |\mathcal{D}_n^S|= \sum_{k = 0}^{n} e_k^S \,.
\end{equation*}

\begin{lemma}\label{compXY}
Let  $X$ be a $p_n$-\Nametwo{} in direction $\ell$, on $\ZZ^d$ with  $p_n= \mathcal{C} n^{-\beta} \wedge 1$ and $\beta \in (0,1]$. For $k\ge 1$, let $Y^k =\{Y^k_i\}_{i\ge 0}$ denote a random walk on $\ZZ^d$ defined by $Y_0^k = X_k$ and for $n\geq 1$
\begin{equation*}
     Y_n^k = X_k + \sum_{i = 1}^{n}\xi_{k + i}\,.
\end{equation*}
{ If $d \ge \max\{9, \lfloor 2 + 4/\beta \rfloor + 1\}$,  there exists $\delta \in (0,\beta)$ such that }
\begin{equation*}
\sum_{k = 1}^{\infty} \frac{ \mathbb{P} \left( e_k^{X} \um_{A_{k,\delta,\beta}} < e_0^{Y^k} \um_{A_{k, \delta,\beta}} \right)}{k} < \infty \,.   
\end{equation*}
\end{lemma}

Before proving Lemma~\ref{compXY}, we show how the proof of Proposition~\ref{prop:RangeERW_lower} follows from it.
\medskip

\begin{proof}[Proof of Proposition~\ref{prop:RangeERW_lower}]
The following deterministic inequality holds $e_k^X \ge e_0^{Y^k} - \um_{\{e_k^X = 0, e_0^{Y^k} = 1\}}$ then we have
\begin{equation}\label{eq1}
\frac{1}{n} \sum_{k=1}^n e_k^X \um_{A_{k, \delta, \beta}} \ge \frac{1}{n} \sum_{k=1}^n e_0^{Y^k} \um_{A_{k, \delta, \beta}} - \frac{1}{n} \sum_{k=1}^n \um_{\{e_k^X = 0, e_0^{Y^k} = 1\}} \um_{A_{k, \delta, \beta}}\,.    
\end{equation}

Note that $|\Rr_n^X| \ge |\mathcal{D}_n^X|$, since if $x \in \mathcal{D}_n^X$ then $x \in \Rr_n^X$. Therefore, it holds that 
\begin{align}\label{eq_4.10}
\liminf_{n \to \infty} \frac{|\Rr_n^X|}{n} &\ge \liminf_{n \to \infty} \frac{|\mathcal{D}_n^X|}{n}\nonumber \\
&\geq \liminf_{n \to \infty} \frac{1}{n}\sum_{k=1}^n e_k^X \um_{A_{k,\delta,\beta}} + \liminf_{n \to \infty} \frac{1}{n}\sum_{k=1}^n e_k^X \um_{A_{k,\delta,\beta}^c}\,. 
\end{align}
For {$\delta \in (0, \beta)$}, Lemma~\ref{lemma_1} implies that 
\begin{equation}\label{eq_4.11}
    \liminf_{n \to \infty} \frac{1}{n}\sum_{k=1}^n e_k^X \um_{A_{k,\delta,\beta}^c}=0\,, \quad  \text{ a.s.}\,.
\end{equation}


We observe that
\begin{align*}
    e_0^{Y^k} = \um_{\{\sum_{i = k+1}^m\xi_{i} \neq 0, \, \forall \, m\ge k + 1\}} = \um_{\{\sum_{i = 1}^m\xi_{i} + Y_0\neq \sum_{i = 1}^k\xi_{i} + Y_0, \, \forall \, m\ge k + 1\}} =e^Y_k\,,
\end{align*}
where $Y=Y^0$ on the RHS above denotes a random walk with i.i.d. increments $\{\xi_i\}_{i\geq 1}$. 
Hence
\begin{equation}\label{eq_4.13}
    \liminf_{n \to \infty} \frac{1}{n} \sum_{k = 1}^n e_0^{Y^k} \um_{A_{k, \delta,\beta}} = \liminf_{n \to \infty} \frac{1}{n} \sum_{k = 1}^n e_k^{Y} \um_{A_{k, \delta,\beta}}\,. 
\end{equation}
Using again Lemma~\ref{lemma_1} and the known fact that $\lim_{n \to \infty} \frac{|\mathcal{D}_n^Y|}{n} = \pi_d$ (see e.g.~\cite{spitzer2001principles} page 39) we obtain that 
\begin{equation}\label{eq_4.14}
\lim_{n \to \infty} \frac{1}{n} \sum_{k = 1}^n e_k^Y \um_{A_{k, \delta,\beta}} = \pi_d\,, \quad  \text{ a.s.}\,.
\end{equation}

We denote $B_k = A_{k, \delta, \beta} \cap \{e_k^X = 0, e_0^Y = 1\}$. By Lemma~\ref{compXY}, \(\sum_{k\ge 1}\mathbb P(B_k)/k<\infty\), which implies that \(\sum_{k\ge 1} \um_{B_k}/k<\infty\) almost surely, thus, by Kronecker's lemma
\begin{equation}\label{eq_4.12}
\lim_{n \to \infty} \frac{1}{n}\sum_{k=1}^n \um_{B_k} = 0\,. 
\end{equation}
Then,  by equations \eqref{eq1},\eqref{eq_4.10},\eqref{eq_4.11},\eqref{eq_4.12},\eqref{eq_4.13} and \eqref{eq_4.14}, we obtain that
$$
    \liminf_{n \to \infty} \frac{|\Rr_n^X|}{n} \ge \pi_d \,, \quad  
    \text{ a.s.}\,.
$$
\end{proof}

\begin{proof}[Proof of Lemma~\ref{compXY}] 
To avoid clutter  we will denote $A_{k,\delta, \beta}$ just by $A_{k,\delta}$.
Let $\nu_k^X$ be the first time the process $X$ returns to the site it visited at time $k$.  We also introduce the sequence $\{ \tau_i^k \}_{i \ge 0}$ where $\tau_i^k$ for $i \ge 1$ represents the $i$-th time  the  random walk gets excited  after time $k$. Specifically, 
\begin{align*}
& \nu^{X}_k:= \inf \{n \geq 1: X_{k+n}= X_{k}\}\,,
\\
& \tau^k_0\equiv 0, \text{ and }
\\
& \tau_i^k:=\inf\{n> \tau^k_{i-1}: \um_{\{X_l \neq X_{k+n}, \forall l < k+n\}} \cdot \um_{\{Z_{k+n}^{\beta} = 1\}} = 1\}, \text{ for $i\geq 1$}\,.
\end{align*}

For $k \geq 1$, let us also define a sequence of independent random variables $\{H_j^k\}_{j \ge 1}$, such that each $H_j^k$ has a geometric distribution with parameter {$(k + j)^{-\beta}$ for all $j \ge 1$ and $\beta \in (0,1]$}. Additionally, we introduce the following definitions   
\begin{align*}
& \mathcal{G}_m^k = \sum_{j = 1}^m H_j^k \,, \, 
\\
& G^k_0\equiv 0 \text{ and }  G^k_i:= \inf\{\ell > G_{i-1}^k: Z_{\ell + k}^{\beta} =1\},  \text{ for $i\geq 1$}\,,
\end{align*}
where, $Z_\ell^{\beta}=\um_{\{U_\ell \leq  \ell^{-\beta}\}}$.
%
What we are after is to provide an upper bound for $\PP ( e_k^{X} \um_{A_{k, \delta}} < e_0^{Y^k} \um_{A_{k, \delta}} )$ for all $k \ge 1$. The process $X$ can be rewritten as (see, ~\eqref{xn-incremento2})
$$
X_0 = 0, \text{ and }\; X_n  = \sum_{i=1}^n \xi_i + \sum_{i=1}^n\um_{B_i}\big( \gamma_i - \xi_i\big) \, , \ n\ge 1\,,
$$
where {$B_i:= E_{i-1}^c \cap \{U_i \le i^{-\beta}\}$} for any $i \ge 1$ and we recall that $E_i$ corresponds to the event that the process $\{X_n\}_{n \geq 0}$ is, at time $i$,  in an already visited site. Then, we have that
\begin{equation*}
\begin{split}
 \mathbb{P}( e^{X}_k \um_{A_{k,\delta}} < e^{Y^k}_0 \um_{A_{k, \delta}}) &= \mathbb{P}(e^{X}_k=0, e^{Y^k}_0 =1, A_{k, \delta}) 
\\
& = \mathbb{P}(\nu^{X}_k < +\infty, e^{Y^k}_0 = 1, A_{k,\delta}) 
\\
& \le \sum_{m=1}^{\infty} \mathbb{P}\left( \tau_{m}^k < \nu^{X}_k \leq \tau_{m+1}^k, A_{k, \delta} \right)\,.
\end{split}
\end{equation*}
On the event $\{\tau_m^k < \nu_k^X \le \tau_{m+1}^k$\} we have that
$$
    0 = X_{k + \nu_k} - X_k = \sum_{j=k + 1}^{k + \nu_k^X} \xi_i + \sum_{j= k + 1}^{k + \nu_k^X}\um_{B_i}\big( \gamma_i - \xi_i\big) = \sum_{j=k + 1}^{k + \nu_k^X} \xi_i + \sum_{j= 1}^{m}\big( \gamma_{\tau_j^k} - \xi_{\tau_j^k}\big)\,. 
$$
Since $\tau^k_i\geq G^k_i$ for all $i\geq 0$ and on the event $A_{k, \delta}$ it holds that $G_i^k >  i + \lfloor k^{\delta}\rfloor$, we obtain that
\begin{equation*}
\begin{split}
& \mathbb{P}  ( e^{X}_k \um_{A_{k,\delta}} < e^{Y^k}_0 \um_{A_{k, \delta}})
\\
& \le \sum_{m=1}^{\infty} \sum_{\ell = m}^{\infty} \mathbb{P}\Big( G_m^k < \nu^{X}_k, \sum_{j=k+1}^{k+\nu^{X}_k} \xi_j = \sum_{j=1}^{m} \big( \xi_{\tau_j^k} - \gamma_{\tau_j^k} \big), G_m^k = \ell + \lfloor k^{\delta} \rfloor, A_{k, \delta} \Big)
\\
& \leq \sum_{m=1}^{\infty} \sum_{\ell=m}^\infty \sum_{n=\ell+\lfloor k^\delta\rfloor + 1}^{\infty} \mathbb{P}\Big( \nu^{X}_k=n, \sum_{j=k+1}^{k+n} \xi_j = \sum_{j=1}^{m} \big( \xi_{\tau_j^k} - \gamma_{\tau_j^k} \big), G_m^k = \ell + \lfloor k^\delta\rfloor \Big) 
\\
& \leq \sum_{m=1}^{\infty} \sum_{\ell=m}^\infty \sum_{n=\ell+\lfloor k^\delta\rfloor + 1}^{\infty} \mathbb{P}\Big(\sum_{j=k+1}^{k+n}\xi_j = \sum_{j=1}^{m}\big(\xi_{\tau_j^k} - \gamma_{\tau_j^k}\big), G_m^k = \ell + \lfloor k^\delta\rfloor \Big) \,.
\end{split}    
\end{equation*}

Note that 
$$
  \sum_{j=1}^{m}\big(\xi_{\tau_j^k} - \gamma_{\tau_j^k}\big) =  \sum_{j=1}^{m}\xi_{\tau_j^k}  -  \Big[m\mu + \sum_{j=1}^{m}\big(\gamma_{\tau_j^k} - \mu\big)\Big] \,, 
$$
where $\mu$ denote the mean vector of $\gamma$ and  by the Central Limit Theorem, both terms $\sum_{j=1}^{m}\xi_{\tau_j^k} $ and $\sum_{j=1}^{m}\big(\gamma_{\tau_j^k} -\mu) $ are of order $\sqrt{m}$. In view of this, let $\alpha \in (1/2,1)$ be a parameter to be determined later. For $m\geq 1$,  let us define 
\[
D_m:= \left\{x+y: x \in B(0, 2Km^\alpha) \text{ and } y \in B(m\mu, 2Km^\alpha)\right\}\,,
\]
where $B(a, R):=\{x\in \mathbb{Z}^d: \Vert x-a\Vert \leq R\}$ with  $a \in \mathbb{Z}^d$ and $R \in [0,+\infty)$. 

Then we have that
\begin{equation}\label{eq:decomposition}
\begin{split}
& \mathbb{P}\Big(\sum_{j=k+1}^{k+n}\xi_j = \sum_{j=1}^{m} \big( \xi_{\tau_j^k} - \gamma_{\tau_j^k}\big), G_m^k = \ell + \lfloor k^\delta\rfloor \Big) 
=  
\\
& \sum_{z \in D_m} \mathbb{P}\Big(
\sum_{j=k+1}^{k+n}\xi_j =z, \sum_{j=1}^{m} \big( \xi_{\tau_j^k} - \gamma_{\tau_j^k} \big) = z, G_m^k = \ell + \lfloor k^\delta\rfloor  \Big)
\\
&+ \sum_{z \in D^\complement_m} \mathbb{P} \Big(
\sum_{j=k+1}^{k+n}\xi_j = z, \sum_{j=1}^{m} \big( \xi_{\tau_j^k} - \gamma_{\tau_j^k} \big) = z, G_m^k = \ell + \lfloor k^\delta\rfloor \Big)\,.    
\end{split}
\end{equation}

We will analyze the two terms on the RHS of ~\eqref{eq:decomposition} separately. For the  first sum portion it holds that
\begin{equation}\label{eq_sum1<1}
\begin{split}
\sum_{z \in D_m} & \mathbb{P} \Big(
\sum_{j=k+1}^{k+n} \xi_j = z, \sum_{j=1}^{m} \big( \xi_{\tau_j^k} - \gamma_{\tau_j^k} \big) = z, G_m^k = \ell + \lfloor k^\delta\rfloor \Big)
\\
& \leq \sum_{z \in D_m} \mathbb{P}\Big(
\sum_{j=k+1}^{k+n}\xi_j = z \Big) \mathbb{P} ( G_m^k = \ell + \lfloor k^\delta\rfloor) 
\\
& \leq (4Km^\alpha)^d \frac{C_d}{n^{d/2}} \mathbb{P}( G_m^k = \ell + \lfloor k^\delta\rfloor)\,,    
\end{split}    
\end{equation}
where, in the second inequality in~\eqref{eq_sum1<1} we use the fact that $|D_m| \le (4Km^{\alpha})^d$ and the Local Central Limit Theorem 
see, e.g., Inequality (2.8) in \cite[Theorem 2.1.3]{lawler2010random}, with $C_d$ a positive constant. For the second term in~\eqref{eq:decomposition} we have
\begin{equation}\label{eq_sum<2}
\begin{split}
&\sum_{z \in D^c_m} \mathbb{P}\Big(
\sum_{j=k+1}^{k+n} \xi_j = z, \sum_{j=1}^{m} \big( \xi_{\tau_j^k} - \gamma_{\tau_j^k} \big) = z, G_m^k = \ell + \lfloor k^\delta\rfloor \Big)
\\
& \le \sum_{z \in D^c_m \cap B(0,2Km)} \mathbb{P}\Big(
\sum_{j=k+1}^{k+n} \xi_j = z,\sum_{j=1}^{m}\big( \xi_{\tau_j^k} - \gamma_{\tau_j^k} \big) = z \Big) 
\\
&\leq \mathbb{P}\Big(
\sum_{j=k+1}^{k+n} \xi_j \in B(0,2Km) \setminus D_m, \sum_{j=1}^{m} \big( \xi_{\tau_j^k} - \gamma_{\tau_j^k} \big) \in B(0,2Km) \setminus D_m \Big) 
\\
&\leq \mathbb{P}\Big(
\sum_{j=k+1}^{k+n} \xi_j \in B(0,2Km) \setminus D_m \Big)^{1/2} \mathbb{P}\Big( \sum_{j=1}^{m} \big( \xi_{\tau_j^k} - \gamma_{\tau_j^k} \big) \in B(0,2Km) \setminus D_m \Big)^{1/2} \,, \end{split}    
\end{equation}
where, in the first inequality we are using that $\sum_{j=1}^{m} \big( \xi_{\tau_j^k} - \gamma_{\tau_j^k} \big) \in B(0,2Km)$, while   the last inequality follows from Cauchy-Schwarz inequality. Regarding the first term in~\eqref{eq_sum<2}, by using the Local Central Limit Theorem again, it holds that 
\begin{equation}\label{eq_sum<2.1}
\begin{split}
\mathbb{P}\Big(
\sum_{j=k+1}^{k+n}\xi_j \in B(0,2Km) \setminus D_m \Big) & = \sum_{z \in D^c_m \cap B(0,2Km)} \mathbb{P}\Big(
\sum_{j=k+1}^{k+n} \xi_j = z\Big)
\\
& \leq \frac{C_d}{n^{d/2}} |B(0,2Km)|\,.
\end{split}
\end{equation}
For the second term in~\eqref{eq_sum<2}, let us define $F := B(0,2Km) \setminus D_m$, $G := B(0,2Km) \setminus B(0,2Km^{\alpha})$ and $A := B(0, 2Km) \setminus B(m\mu, 2Km^{\alpha})$. Note that, by the definition of $D_m$, if $x+y \in F$ then $x \in G$ or $y \in A$. We then  obtain
\begin{equation}\label{eq_sum<2.2}
\begin{split}
& \mathbb{P}\Big(\sum_{j=1}^{m} \big(\xi_{\tau_j^k} - \gamma_{\tau_j^k} \big) \in F \Big) \leq \mathbb{P}\Big( \sum_{j=1}^{m} \xi_{\tau^k_j} \in G \Big)  + \mathbb{P}\Big(\sum_{j=1}^{m} \gamma_{\tau^k_j} \in A \Big) 
\\
& = \mathbb{P}\Big(\sum_{j=1}^{m}\xi_{j} \in G \Big) + \hat{\mathbb{P}}\Big(\sum_{j=1}^{m}\gamma_{j} \in A \Big) 
\\
& = \mathbb{P}\Big(\sum_{j=1}^{m}\xi_{j} \in G \Big) + \mathbb{P}\Big( \sum_{j=1}^{m}(\gamma_{j}-\mu) \in B(-m\mu,2Km) \setminus  B(0,2Km^\alpha)\Big) 
\\
& 
\leq 2|B(0,2Km)| \Hat{C}_{d,K} \Big[ e^{-2d\zeta K^2m^{2\alpha-1}}\Big(\frac{1}{m^{\frac{d}{2}}} + \frac{(2Km)^{8d}}{m^{4d}m^{\frac{d+2}{2}}} + \frac{1}{m^{\frac{d+2}{2}}} \Big)  + \frac{1}{m^{\frac{9d-1}{2}}} \Big]
\\
& 
\leq 2|B(0,2Km)| \widetilde{C}_{d,K} \Big(m^{4d} e^{-2d \zeta K^2m^{2\alpha-1}} + \frac{1}{m^{\frac{9d-1}{2}}} \Big)\,. 
\end{split}    
\end{equation}
In~\eqref{eq_sum<2.2}, we applied Lemma~\ref{lem: iid} to achieve the first equality. For the first inequality, we utilized the Local Central Limit Theorem, see, e.g., Inequality (2.8) in \cite[Theorem 2.1.3]{lawler2010random} with $k=8d$,
$\zeta >0$ 
and the constant $C_d$ depending on the covariance matrix of $\xi$ (see, e.g., Equations~2.2 and 1.1 and Proposition~1.1.1 (c) in \cite{lawler2010random}).  
%
Thus, using~\eqref{eq_sum<2.1} and~\eqref{eq_sum<2.2} we obtain that the rightmost term in~\eqref{eq_sum<2} is bounded above by 
\begin{equation}\label{eq:sum<2.3}
\begin{split}
\Big[ 2 & |B(0,2Km)| \widetilde{C}_{d,K} \Big(m^{4d} e^{-2d\zeta K^2m^{2\alpha-1}} + m^{-\frac{9d-1}{2}} \Big) \Big]^{1/2} \Big[ \frac{C_d}{n^{d/2}} |B(0,2Km)|\Big]^{1/2}
\\
& \le \frac{m^d\Tilde{C}_{d,K}}{n^{d/4}} \Big(m^{4d} e^{-2d\zeta K^2m^{2\alpha-1}} + m^{-\frac{9d-1}{2}} \Big)^{1/2} 
\\
& \le \frac{m^d\Tilde{C}_{d,K}}{n^{d/4}} \Big(m^{2d} e^{-d\zeta K^2 m^{2\alpha-1}} + m^{-\frac{9d-1}{4}} \Big)\,.
\end{split}
\end{equation}
Overall, by~\eqref{eq_sum1<1} and~\eqref{eq:sum<2.3} we obtain that

\begin{equation*}
\begin{split}
& \mathbb{P}  ( e^{X}_k \um_{A_{k,\delta}} < e^{Y}_k \um_{A_{k, \delta}})  \leq\underbrace{\sum_{m=1}^{\infty} \sum_{\ell=m}^\infty  \sum_{n=\ell+ \lfloor k^\delta\rfloor + 1}^{\infty} (4Km^\alpha)^d \frac{C_d}{n^{d/2}} \mathbb{P}(G_m^k = \ell + \lfloor k^\delta\rfloor) }_{SUM_1}
\\
&+\underbrace{\sum_{m=1}^{\infty} \sum_{\ell = m}^\infty \sum_{n = \ell + \lfloor k^\delta\rfloor + 1}^{\infty} \frac{m^d\Tilde{C}_{d,K}}{n^{d/4}} \Big(m^{2d} e^{-d\zeta K^2 m^{2\alpha-1}} + m^{-\frac{9d-1}{4}} \Big) }_{SUM_2}\,. 
\end{split}
\end{equation*}

To achieve the desired result, it suffices to show that $\mathbb{P} ( e^{X}_k \um_{A_{k,\delta}} < e^{Y^k}_0 \um_{A_{k, \delta}}) < Ck^{-\varepsilon}$ for some $C, \varepsilon > 0$ and all sufficiently large $k$. Therefore, it is enough to bound $SUM_1 + SUM_2$ by $C k^{-\varepsilon}$ for some $\varepsilon > 0$.

We bound $SUM_1$ and $SUM_2$ separately by $C k^{-\varepsilon}$, determine for which values of $d$ one obtains $\varepsilon > 0$ in each case, and then choose $d$ so that both conditions are satisfied. We begin with $SUM_1$  for which we obtain that 
\begin{equation*}
\begin{split}
& SUM_1 = \sum_{m=1}^{\infty} (4Km^\alpha)^d \sum_{\ell=m}^\infty \mathbb{P}(G_m^k = \ell + \lfloor k^\delta\rfloor) \sum_{n=\ell+ \lfloor k^\delta\rfloor + 1}^{\infty}  \frac{C_d}{n^{d/2}} 
\\
& \le C_d \sum_{m=1}^{\infty} (4Km^\alpha)^d \sum_{\ell = m}^\infty \mathbb{P}( G_m^k = \ell + \lfloor k^\delta\rfloor) \int_{\ell + \lfloor k^\delta\rfloor}^\infty  \frac{1}{x^{d/2}}dx
\\
& \overset{d>2}{=} \frac{C_d (4K)^d}{d/2-1}\sum_{m=1}^{\infty}    m^{\alpha d}   \sum_{\ell=m}^\infty \mathbb{P} (G_m^k = \ell + \lfloor k^\delta\rfloor) \frac{1}{(\ell + \lfloor k^\delta\rfloor)^{d/2-1}}
\\
& \leq C_{d,K} \sum_{m=1}^{\infty}    m^{\alpha d}   \sum_{\ell=m}^\infty \mathbb{P}(G_m^k = \ell) \frac{1}{(\ell + \lfloor k^\delta\rfloor)^{d/2-1}} 
\\
& \leq C_{d,K} \sum_{m=1}^{\infty}    m^{\alpha d} \,\mathbb{E}\Big[ \frac{1}{( G_m^k + \lfloor k^\delta\rfloor)^{d/2-1}} \Big] \,.
\end{split}    
\end{equation*}
Note that in the second to last inequality we use that $\mathbb{P} (G_m^k = \ell + \lfloor k^\delta\rfloor)\leq \mathbb{P} (G_m^k = \ell)$.
We first note that $G^k_m \ge m$ and also that stochastically dominates $\mathcal{G}_m^k$, where $ \mathcal{G}_m^k = \sum_{j=1}^m H^k_j$, with $\{H^k_j\}_{j\geq 1}$  independent random variables with geometric distribution with parameter {$(k+j)^{-\beta}$} for each $j \ge 1$. Now, we  compute an upper bound for $\mathbb{E}[(G_m^k + \lfloor k^\delta \rfloor)^{1-d/2}]$. 
{ Given $\lambda<1$, we decompose the latter expectation as follows
\begin{equation*}
\begin{split}
&\mathbb{E}[(G_m^k + \lfloor k^\delta \rfloor)^{1-d/2}] = 
\\
& \mathbb{E}\Big[ \frac{1}{( G_m^k + \lfloor k^\delta\rfloor)^{d/2-1}} ; G_m^k < \frac{\lambda m^{1+\beta}}{1+\beta} \Big] +  \mathbb{E}\Big[ \frac{1}{( G_m^k + \lfloor k^\delta\rfloor)^{d/2-1}} ; G_m^k \ge \frac{\lambda m^{1+\beta}}{1+\beta} \Big]
\\
& \leq \frac{1}{( m + \lfloor k^\delta\rfloor)^{d/2-1}} \mathbb{P} \Big[ G_m^k < \frac{\lambda m^{1+\beta}}{1+\beta} \Big] + \frac{(1+\beta)^{d/2-1}}{(\lambda m^{1+\beta} + (1+\beta) \lfloor k^\delta\rfloor)^{d/2-1}}
\\
& \leq \frac{1}{( m + \lfloor k^\delta\rfloor)^{d/2-1}} \mathbb{P} \Big[ \mathcal{G}_m^k < \frac{\lambda m^{1+\beta}}{1+\beta} \Big] + \frac{(1+\beta)^{d/2-1}}{(\lambda m^{1+\beta} + (1+\beta) \lfloor k^\delta\rfloor)^{d/2-1}}\,.
\end{split}
\end{equation*}
Then, using Lemma~\ref{lem:geo_sum_bound} we obtain that  $\PP[\mathcal{G}_m^k < \frac{\lambda m^{1+\beta}}{1+\beta}]\leq e^{-\frac{C_{\lambda}}{1+\beta}m }$. 
Choosing $\alpha=1/2 + 1/(cd)>1/2$ in $SUM_1$, with $c$ is a sufficiently large positive constant, and setting $\rho_{\beta, \lambda}:= \frac{1+\beta}{\lambda}$ and $\eta_{\beta, \lambda}:= \frac{C_\lambda}{1+\beta}$ (both positive),  we obtain  
\begin{equation*}
\begin{split}
& SUM_1  \leq C_{d,K} \Big(\sum_{m=1}^{\infty}    \frac{m^{d/2 + 1/c} e^{-\eta_{\beta, \lambda}m} }{( m + \lfloor k^\delta\rfloor)^{d/2-1}} +  \sum_{m = 1}^{\infty} \frac{\rho_{\beta, \lambda}^{d/2-1} m^{d/2 + 1/c}}{\big(m^{1+\beta} + \rho_{\beta, \lambda} \lfloor k^\delta\rfloor\big)^{d/2-1}} \Big)\,.
\end{split}
\end{equation*}

The first summation above can be bounded by
\begin{align*}
\sum_{m=1}^{\infty} &   \frac{m^{d/2 + 1/c} e^{-\eta_{\beta, \lambda}m} }{( m + \lfloor k^\delta\rfloor)^{d/2-1}} \le \frac{1}{\lfloor k^\delta\rfloor^{d/2-1}} \sum_{m=1}^{\infty}  m^{d/2 + 1/c} e^{-\eta_{\beta, \lambda}m} = \frac{C_{\beta, \lambda, c, d}}{\lfloor k^\delta\rfloor^{d/2-1}}\,, 
\end{align*}
which yields a positive exponent whenever 
$d \ge 3$.

The second summation can be bounded as follows: 
\begin{align*}
&\sum_{m = 1}^{\infty} \frac{\rho_{\beta, \lambda}^{d/2-1} m^{d/2 + 1/c}}{\big(m^{1+\beta} + \rho_{\beta, \lambda} \lfloor k^\delta\rfloor\big)^{d/2-1}}  \leq\int_{0}^\infty \frac{\rho_{\beta, \lambda}^{d/2-1} x^{d/2 + 1/c}}{\big(x^{1+\beta} + \rho_{\beta, \lambda} \lfloor k^\delta\rfloor\big)^{d/2-1}} dx
\\
&= \frac{\rho_{\beta, \lambda}^{d/2-1}}{1+\beta}\int_{0}^\infty \frac{ y^{\frac{d/2 + 1/c - \beta}{1+\beta}}}{\big(y + \rho_{\beta, \lambda} \lfloor k^\delta\rfloor\big)^{d/2-1}}   dy \leq \frac{\rho_{\beta, \lambda}^{d/2-1}}{1+\beta} \int_{\rho_{\beta, \lambda}\lfloor k^\delta \rfloor}^{\infty} y^{-\frac{\beta (d/2+1) -1/c}{1+\beta} +1}dy
\\
&\overset{(*)}{=} \frac{\rho_{\beta, \lambda}^{d/2-1}}{ \beta(d/2+1) - 1/c -2(1+\beta)} \frac{1}{\big(\rho_{\beta, \lambda} \cdot \lfloor  k^\delta \rfloor\big)^{\frac{1}{1+\beta} \big( \beta(d/2+1) - 1/c\big) -2} }\,, 
\end{align*}
where in the second equality we used the change of variables $y = x^{1+\beta}$. In $(*)$, we require that 
$2-\frac{\beta}{1+\beta}(d/2 +1) + \frac{1/c}{1+\beta}<0$, which holds whenever $d>2\frac{\beta+2}{\beta}$ and $c$ is sufficiently large. 
Therefore, the second summation is bounded by the RHS of $(*)$. Moreover, the exponent of $k$ is positive provided that   
\begin{align}\label{eq:thres1}
\delta \left( \frac{\beta}{1+\beta}(d/2+1) - \frac{1}{c(1+\beta)} -2 \right)>0\,,  
\end{align}
that is, whenever 
\begin{equation}\label{eq:threshold}
d \ge \lfloor 2 + 4/\beta \rfloor +1 \,,  
\end{equation}
and $c$ sufficiently large.
%
%

We now compute an upper bound for $SUM_2$.
\begin{equation}\label{eq:sum2_1}
\begin{split}
& SUM_2 = \Tilde{C}_{d,K} \sum_{m=1}^{\infty} m^d \Big( m^{2d} e^{-d\zeta K^2 m^{2\alpha-1}} + \frac{1}{m^{\frac{9d-1}{4}}} \Big) \sum_{\ell = m}^\infty \sum_{n = \ell + \lfloor k^\delta\rfloor + 1}^{\infty} n^{-d/4}
\\
& \le \Tilde{C}_{d,K} \sum_{m=1}^{\infty} m^d \Big( m^{2d} e^{-d\zeta K^2 m^{2\alpha-1}} + \frac{1}{m^{\frac{9d-1}{4}}} \Big) 
\sum_{\ell = m}^\infty \frac{1}{(\lfloor k^\delta\rfloor + \ell)^{\frac{d}{4}-1}}
\\
& \le \Tilde{C}_{d,K} \sum_{m=1}^{\infty} m^d \Big( m^{2d} e^{-d\zeta K^2 m^{2\alpha-1}} + \frac{1}{m^{\frac{9d-1}{4}}} \Big) 
\sum_{\ell = \lfloor k^\delta\rfloor + m}^\infty \frac{1}{\ell^{\frac{d}{4}-1}}
\\
& \le \Tilde{C}_{d,K} \sum_{m=1}^{\infty} m^d \Big( m^{2d} e^{-d\zeta K^2 m^{2\alpha-1}} + \frac{1}{m^{\frac{9d-1}{4}}} \Big)  \frac{1}{(m+ \lfloor k^\delta\rfloor -1)^{\frac{d}{4}-2}}
\\
& \le \frac{C_{d,K}'}{\lfloor k^\delta\rfloor^{d/4-2}} \sum_{m=1}^{\infty} m^d \Big( m^{2d} e^{-d\zeta K^2 m^{2\alpha-1}} + \frac{1}{m^{\frac{9d-1}{4}}} \Big) \le \frac{C_{d,K}''}{\lfloor k^\delta\rfloor^{d/4-2}} \,,
\end{split}    
\end{equation}
where, in the last inequality of~\eqref{eq:sum2_1} we used that $\alpha > 1/2$. 

Observe that we have $SUM_2 \le C''k^{-\delta(d/4 -2)}$. In order for $\delta(d/4 - 2) > 0$, we require $d \ge 9$.

Thus, taking $d \ge \max\{9, \lfloor 2 + 4/\beta \rfloor + 1\}$, the desired result follows.

}
\end{proof}

\section{Sub-ballistic strong law of large numbers for $p_n$-\Nametwo{}}\label{sec:SLLN_X}
In this section, we prove Theorem~\ref{thm:conv-Xbeta}.
Henceforth, let  $X$ be a $p_n$-\Nametwo{} in direction $\ell$ on $\ZZ^d$ with $d\geq 2$, $p_n= \mathcal{C} n^{-\beta} \wedge 1$ and $\beta\in (0,1)$; without loss of generality, we  assume $\CC = 1$, i.e., $p_n=n^{-\beta}$. 

Let us denote by $K^{X}_n$ the set of times from $0$ to $n-1$ in which $X$ visits a site for the first time and becomes excited.  We can write $K^{X}_n$ as
\begin{equation*}
K^{X}_n = \big\{ i \in \{1, 2, \dots, n\} : \um_{E_{i-1}^c \cap \{ U_i \leq i^{-\beta}\}} =1 \big\}\,.  
\end{equation*}

If $\{\psi_i\}_{i \ge 1}$ denotes  the sequence of $\FF$-stopping times  corresponding to the  times the $p_n$-\Nametwo{} visits a new site, then setting $\varphi_i := \psi_i + 1$, we have that
\begin{equation}\label{eq: def Kn}
|K^{X}_n| = \sum_{i=1}^{n} \um_{E_{i-1}^c \cap \{ U_i \leq i^{-\beta}\}} = \sum_{j=1}^{|\Rr_{n-1}^X|} \um_{\{U_{\varphi_j} \leq \varphi_j^{-\beta} \}}\,. 
\end{equation}
When clear from the context,  we shall omit the dependence on $X$ and simply write $|K_n|$ rather than $|K^{X}_n|$. 
The key ingredient in the proof of Theorem~\ref{thm:conv-Xbeta} is the following result.


\begin{proposition}\label{conv-Kbeta} 
Let  $X$ be a $p_n$-\Nametwo{} in direction $\ell$ on $\ZZ^d$ with $d\geq 2$, $p_n=n^{-\beta}$ and $\beta \in (0,1)$. Then, it holds that 
\begin{equation}\label{eq:UppK_n}
\limsup_{n\to \infty} \frac{|K^{X}_n|}{n^{1 - \beta}} \leq \frac{\pi_d}{1-\beta} \,, \quad  \text{ a.s.}\,.    
\end{equation}
Moreover, assuming that $X$ satisfies Hypothesis~\ref{hyp}, then 
\begin{equation}\label{eq:convK_n}
\lim_{n\to \infty} \frac{|K^{X}_n|}{n^{1 - \beta}} = \frac{\pi_d}{1-\beta} \,, \quad \text{ a.s.}\,.
\end{equation}
\end{proposition}

Before proving Proposition~\ref{conv-Kbeta}, we show how the proof of Theorem~\ref{thm:conv-Xbeta} follows from it.

\begin{proof}[Proof of Theorem~\ref{thm:conv-Xbeta}]
Define 
\begin{equation*}
J_{n} := \sum_{i=1}^{n} \um_{E_{i-1}^c \cap \{ U_i \leq   i^{-\beta}\}} (\gamma_i - \xi_i) \,.
\end{equation*}
Then, $X_n$ defined in ~\eqref{xn-incremnto1} can be rewritten as 
\begin{equation*}
X_n  = \sum_{i=1}^n \big( \um_{ E_{i-1} \cup \{ U_i > i^{-\beta}\}} \xi_i + \um_{E_{i-1}^c \cap \{ U_i \leq i^{-\beta}\}} \gamma_i \big) = \sum_{i=1}^n  \xi_i + J_n \,.
\end{equation*}
By Theorem 17.3.II.A in \cite{loeve} (page 250), since $\beta \in (0,1/2)$, it holds that
$$
    \lim_{n\to\infty}\frac{1}{n^{1-\beta}}\sum_{i=1}^{n}\xi_i = 0, \, \text{ a.s.}
$$
Then the convergence of $\big\{\frac{X_{n}}{n^{1 - \beta}}\big\}_{n\ge 1}$ depends on the convergence of 
$$
\frac{J_{n}}{n^{1 - \beta}}\ = \frac{1}{n^{1-\beta}} \sum_{i \in K_{n}}  (\gamma_i - \xi_i)= \frac{|K_{n}|}{n^{1-\beta}} \frac{1}{|K_{n}|}\sum_{i \in K_{n}}  (\gamma_i - \xi_i)\,,
$$
where we recall that $K_n$ denotes the set of times from $0$ to $n-1$ in which $X$ visits a site for the first time and becomes excited. By \eqref{eq:convK_n} in Proposition~\ref{conv-Kbeta}, we have that $\frac{|K_{n}|}{n^{1-\beta}}$ converges almost surely to $\frac{\pi_d}{1-\beta}$. 
Since  the sequence of random vectors $\{\gamma_{\varphi_i} -\xi_{\varphi_i}\}_{i \geq 1}$ is i.i.d. having  the same distribution as $\{\gamma_{i} -\xi_{i}\}_{i \geq 1}$, which is also i.i.d. (see Lemma~\ref{lem: iid}), we can use~\cite[Theorem 8.2 item (iii)]{gut2005probability} and obtain 
\begin{equation*} 
   \sum_{i \in K_{ n}} \frac{ (\gamma_i - \xi_i)}{|K_{ n }|} = \sum_{i=1}^{|K_ n|} \frac{ (\gamma_{\varphi_i} - \xi_{\varphi_i})}{|K_{ n}|} \xrightarrow[n \to \infty]{} \EE[\gamma_1 -\xi_1] = \EE[\gamma_1] \,, \quad  \text{ a.s.}\,.
\end{equation*}
\end{proof}

Before proving Proposition~\ref{conv-Kbeta} let us introduce some auxiliary results. 
%

\begin{lemma}\label{lem:varphi}{Let  $X$ be a $p_n$-\Nametwo{} in direction $\ell$ on $\ZZ^d$ with $d\geq 2$,  $p_n=n^{-\beta}$ and $\beta \in (0,1)$. Then, it holds that
\begin{equation}\label{conv1/phi1}
\limsup_n \frac{1}{n^{1-\beta}}\sum_{j=1}^n \frac{1}{\varphi_j^\beta} \le \frac{\pi_d^\beta}{1-\beta}\,, \quad  \text{ a.s.}\,.
\end{equation}
Moreover, assuming that $X$ satisfies  Hypothesis~\ref{hyp}, then 
\begin{equation}\label{conv1/phi2}
\frac{1}{n^{1-\beta}}\sum_{j=1}^n \frac{1}{\varphi_j^\beta} \xrightarrow[n\to \infty]{} \frac{\pi_d^\beta}{1-\beta}\,, \quad \text{ a.s.}\,.
\end{equation}}
\end{lemma}
\begin{proof} {We will only prove \eqref{conv1/phi2} under Hypothesis~\ref{hyp}, but it is straightforward to obtain \eqref{conv1/phi1} using the upper bound in Proposition \ref{prop:RangeERW} instead of Hypothesis~\ref{hyp}.}

Let us begin observing that by definition of the random times $\varphi_j$ and of the range $\Rr_n^X$,    we have that  $|\Rr^X_{\varphi_j}|-1 \le j \le |\Rr^X_{\varphi_j}|$, for every $j$. Moreover, since $\varphi_j \to + \infty$ a.s. as $j$ tends to infinity, by Hypothesis~\ref{hyp}, it  holds that  $\lim_{j \to \infty}\frac{j}{\varphi_j}= \lim_{j \to \infty}\frac{|\Rr_{\varphi_j}^X|}{\varphi_j} = \pi_d$  almost surely. Then, since
\begin{align*}
    \sum_{j=1}^n \frac{1}{\varphi_j^{\beta}} = \sum_{j=1}^n \Big(\frac{j}{\varphi_j}\Big)^{\beta}\frac{1}{j^\beta}\,, 
\end{align*}
by 
the Stolz-Cesàro theorem applied to the sequence $a_n=\sum_{j=1}^n \Big(\frac{j}{\varphi_j}\Big)^{\beta}\frac{1}{j^\beta}$ and $b_n=\sum_{j=1}^n \frac{1}{j^\beta}$, the claim follows
using that $\Big(\frac{j}{\varphi_j}\Big)^\beta\xrightarrow[j \to \infty]{} \pi_d^\beta$  almost surely and that $\frac{1}{n^{1-\beta}}\sum_{j=1}^n \frac{1}{j^\beta} \to \frac{1}{1-\beta}$ as $n$ tends to infinity. 
\end{proof}

Let $\{\mathcal{F}_{\psi_k}\}_{k\geq 1}$ denote the filtration associated with the sequence $\{\psi_i\}_{i \ge 1}$ of $\FF$-stopping times. The process
\begin{align}\label{eq:M}
M_k^X := \sum_{j=1}^k \Big[ \um_{\{U_{\varphi_j} \leq \varphi_j^{-\beta} \}} - \frac{1}{\varphi_j^\beta} \Big]\,,
\end{align}
with $M_0^X\equiv 0$,  is a  martingale (with respect to $\{\mathcal{F}_{\psi_k}\}_{k\ge 1}$) with bounded increments (when clear from the context,  we omit to write the dependence on $X$ in the notation).  
As a matter of fact 
\begin{align*}
 \mathbb{E}[M_k^X\mid \mathcal{F}_{\psi_k}]&= \mathbb{E}\Big[ \um_{\{U_{\varphi_k} \leq \varphi_k^{-\beta} \}} - \frac{1}{\varphi_k^\beta}+M^X_{k-1}\mid \mathcal{F}_{\psi_k}\Big]=  M^X_{k-1}\,, 
\\
&\Big\vert \um_{\{U_{\varphi_k} \leq\varphi_k^{-\beta} \}} - \varphi_k^{-\beta}\Big\vert\leq 2\,.
\end{align*}



The quadratic variation  $\langle M^X\rangle_k$ of the martingale defined in \eqref{eq:M} is given by  
\begin{align*}
\langle M^X\rangle_k = \sum_{j=1}^k \mathbb{E}\Big[ \Big(\um_{\{U_{\varphi_j} \leq \varphi_j^{-\beta} \}} - \frac{1}{\varphi_j^\beta}\Big)^2 \bigm|  \mathcal{F}_{\psi_j} \Big]= \sum_{j=1}^k \Big(1 - \frac{1}{\varphi_j^\beta}\Big)\frac{1}{\varphi_j^{\beta}}\,.  
\end{align*}

\begin{lemma}\label{lem:quadratic-variation}
Let  $X$ be a $p_n$-\Nametwo{} in direction $\ell$ on $\ZZ^d$ with $d\geq 2$,  $p_n=n^{-\beta}$ and $\beta \in (0,1)$. Then,  there exists a positive constant $c$  such that
\[
\mathbb{P}\big(\limsup_n \{\langle M^X\rangle_n \geq c \, n^{1-\beta}\}\big) = 0\,.
\]
\end{lemma}
\begin{proof}
Let us begin observing that $\langle M\rangle_n \leq \sum_{j=1}^n \frac{1}{\varphi_j^{\beta}}$. Thus, it is enough to show that 
\[
\mathbb{P}\Big(\limsup_n \Big\{ \sum_{j=1}^n \frac{1}{\varphi_j^{\beta}}\geq c \, n^{1-\beta}\Big\}\Big) = 0\,.
\]
This follows directly from \eqref{conv1/phi1} in Lemma~\ref{lem:varphi} if we choose $c > \pi_d^\beta/(1-\beta)$.
\end{proof}

In the next lemma we show that, for all $\beta \in (0,1)$,  $M_n/n^{1-\beta} \to 0$ almost surely. 
\begin{lemma}\label{lem:martingale}
Let  $X$ be a $p_n$-\Nametwo{} in direction $\ell$ on $\ZZ^d$ with $d\geq 2$,  $p_n=n^{-\beta}$ and $\beta \in (0,1)$. Then, it holds that  
\[
\frac{M^X_n}{n^{1-\beta}} \xlongrightarrow[n\to \infty]{} 0\,,  \quad \text{ a.s.}\,.
\] 
\end{lemma}

\begin{proof}
We show that for every $\varepsilon>0$ it holds that
\[
\mathbb{P}\Big(\limsup_n \Big\{ |M_n|\geq \varepsilon\, n^{1-\beta}\Big\}\Big) = 0\,.
\]
Note that, for every $n$, the following inclusion holds
\begin{align*}
\big\{ |M_n|\geq \varepsilon\, n^{1-\beta}\big\} \subseteq \big\{ |M_n|\geq \varepsilon\, n^{1-\beta}, \langle M\rangle_n \leq c \, n^{1-\beta} \big\}    \cup \big\{  \langle M\rangle_n \geq c \, n^{1-\beta} \big\}\,, 
\end{align*}
where, $c$ is a positive constant from Lemma~\ref{lem:quadratic-variation}. 
Thus,
\begin{align*}
 \mathbb{P}\Big(\limsup_n \Big\{ |M_n|\geq \varepsilon\, n^{1-\beta}\Big\}\Big) \leq & \mathbb{P}\Big(\limsup_n \big\{ |M_n|\geq \varepsilon\, n^{1-\beta}, \langle M\rangle_n \leq c \, n^{1-\beta} \big\}\Big)    
 \\
 & + \mathbb{P}\big(\limsup_n \{\langle M\rangle_n \geq c \, n^{1-\beta}\}\big) \,.
\end{align*}
By Lemma~\ref{lem:quadratic-variation}, the second term on the RHS above is equal to $0$. As far as the first term is concerned, we have that for every fix $n$ 

\begin{align*}
  \mathbb{P}&\Big(|M_n|\geq \varepsilon\, n^{1-\beta}, \langle M\rangle_n \leq c \, n^{1-\beta} \Big)\leq \mathbb{P}\Big(|M_k|\geq \varepsilon\, n^{1-\beta}, \langle M\rangle_k \leq c \, n^{1-\beta} \text{ for some $k$}\Big)
 \\
 & \leq 2\exp\left\{-\frac{\varepsilon^2 n^{2(1-\beta)}}{2(cn^{1-\beta} + \varepsilon n^{1-\beta})}\right\} 
  = 2\exp\left\{-\frac{\varepsilon^2 n^{(1-\beta)}}{2(c + \varepsilon)}\right\}\,, 
\end{align*}
where the last inequality follows from  Freedman's inequality \cite{Freedman}. 
\end{proof}



We are now ready to provide the proof of Proposition~\ref{conv-Kbeta}. 
\begin{proof}[Proof of Proposition~\ref{conv-Kbeta}]
We begin proving \eqref{eq:convK_n}.  
For every $\varepsilon>0$, we have that 
\begin{align*}
&\limsup_n\Big\{\Big|\frac{|K_{n}|}{n^{1 - \beta}} - \frac{\pi_d}{1-\beta}\,\Big| > \varepsilon\Big\}\subseteq  \\ 
&\tag{a}
\limsup_n \Big\{\,\Big|
\sum_{j=1}^{|\Rr_{n-1}^X|} \um_{\{U_{\varphi_j} \leq \varphi_j^{-\beta} \}} - \sum_{j=1}^{\pi_d n} \um_{\{U_{\varphi_j} \leq \varphi_j^{-\beta} \}}
\,\Big| > \frac{\varepsilon}{2} n^{1-\beta}\Big\}  
\\
&\tag{b}
\cup \;\limsup_n \Big\{ \,\Big|
\sum_{j=1}^{\pi_d n} \big(\um_{\{U_{\varphi_j} \leq \varphi_j^{-\beta} \}} - \frac{1}{\varphi_j^\beta}
\big)\,\Big| > \frac{\varepsilon}{4} n^{1-\beta}\Big\}  
\\
&\tag{c}
\cup  \; \limsup_n\Big\{ \,\Big|
\sum_{j=1}^{\pi_d n} \frac{1}{\varphi_j^\beta} - \frac{\pi_d n^{1-\beta}}{1-\beta}
\,\Big| > \frac{\varepsilon}{4} n^{1-\beta}\Big\}\,.  
\end{align*}
The event in (b) has probability zero according to Lemma~\ref{lem:martingale}, while the event in (c) also has probability zero according to Lemma~\ref{lem:varphi}. We are left with showing that the probability of the event in (a) is  also zero. Note that
\[
\frac{1}{n^{1-\beta}} \Big|
  \sum_{j=1}^{|\Rr_{n-1}^X|} \um_{\{U_{\varphi_j} \leq \varphi_j^{-\beta} \}} - \sum_{j=1}^{\pi_d n} \um_{\{U_{\varphi_j} \leq \varphi_j^{-\beta} \}} \Big|\,,  
\]
is bounded from above by
\begin{equation}\label{cor:cota1}
\frac{|M_{|\mathcal{R}^X_{n-1}|}|}{n^{1-\beta}} + \frac{|M_{\pi_d n}|}{n^{1-\beta}} + \frac{1}{n^{1-\beta}}
\Big|
  \sum_{j=1}^{|\Rr_{n-1}^X|} \frac{1}{\varphi_j^\beta} - \sum_{j=1}^{\pi_d n} \frac{1}{\varphi_j^\beta}
\,\Big|\,.
\end{equation}
Using Lemma \ref{lem:martingale} and Hypothesis 1, we will obtain that the first two terms in \eqref{cor:cota1} converge to zero almost surely. For the second term, the result is a direct consequence of Lemma~\ref{lem:martingale}. The first term requires a short additional argument because the martingale is evaluated at the random index $|\mathcal{R}^X_{n-1}|$. Under Hypothesis~\ref{hyp}, we have $|\mathcal{R}^X_{n-1}|/n \to \pi_d$ almost surely, and in particular $|\mathcal{R}^X_{n-1}| \to \infty$ almost surely. Writing
\[
\frac{|M_{|\mathcal{R}^X_{n-1}|}|}{n^{1-\beta}}
\;=\;
\frac{|M_{|\mathcal{R}^X_{n-1}|}|}{|\mathcal{R}^X_{n-1}|^{\,1-\beta}}
\cdot
\left(\frac{|\mathcal{R}^X_{n-1}|}{n}\right)^{1-\beta},
\]
the second factor converges to $\pi_d^{\,1-\beta}$ a.s. and by Lemma~\ref{lem:martingale} the first factor converges to zero almost surely. The third term is bounded from above by
\begin{align*}
\frac{1}{n^{1-\beta}} \sum_{j=|\Rr_{n-1}^X| \wedge \pi_d n}^{|\Rr_{n-1}^X| \vee \pi_d n} \frac{1}{j^\beta} & \le \frac{1}{(1-\beta)n^{1-\beta}}
\Big(\big||\Rr_{n-1}^X| - \pi_d n\big|+1\Big)^{1-\beta} \\
& = \frac{1}{1-\beta} \Big( \Big| \frac{|\Rr_{n-1}^X|}{n} - \pi_d \Big| + \frac{1}{n} \Big)^{1-\beta}\,,
\end{align*}
which also converges to zero almost surely under Hypothesis~\ref{hyp}.

As far as \eqref{eq:UppK_n} is concerned, note that by Proposition~\ref{prop:RangeERW}  for all $\epsilon > 0$ we have that
$$
\limsup_{n\to \infty} \frac{|K_{n}|}{n^{1 - \beta}} \le \limsup_{n\to \infty} \frac{1}{n^{1 - \beta}} \sum_{j=1}^{{\lfloor (\pi_d + \epsilon )n\rfloor}} \um_{\{U_{\varphi_j} \leq \varphi_j^{-\beta} \}}\,,
$$
almost surely and by Lemma~\ref{lem:martingale} it  holds that 
$$
\limsup_{n\to \infty} \frac{1}{n^{1 - \beta}} \sum_{j=1}^{\lfloor (\pi_d +\epsilon)n\rfloor} \um_{\{U_{\varphi_j} \leq \varphi_j^{-\beta} \}} =  \limsup_{n\to \infty} \frac{1}{n^{1 - \beta}} \sum_{j=1}^{\lfloor (\pi_d +\epsilon)n\rfloor} \frac{1}{\varphi_j^\beta}{\le \frac{(\pi_d +\epsilon)^{1-\beta}\pi_d^{\beta}}{1 -\beta}}\,,
$$
almost surely. Last inequality follows from  \eqref{conv1/phi1} in Lemma~\ref{lem:varphi}. Letting $\epsilon\to 0$ we obtain \eqref{eq:UppK_n}. 
\end{proof}

\section{Diffusive rescaling of the $p_n$-\Name}\label{resultados_pn}


Let $\{X_n\}_{n \ge 0}$ be a $p_n$-\Name{}. We can rewrite~\eqref{xn-incremnto1} as 
\begin{align}\label{xn-incremento2}
\begin{split}
X_n  & = \sum_{i=1}^n \big( \um_{ E_{i-1} \cup \{ U_i > p_i\}} \xi_i + \um_{E_{i-1}^c \cap \{ U_i \leq p_i\}} \gamma_i \big)
\\
& = \sum_{i=1}^n \big( \xi_i + \um_{E_{i-1}^c \cap \{ U_i \leq p_i\}} (\gamma_i - \xi_i) \big) \,.
\end{split}
\end{align}
Also for the sake of simplicity, we will henceforth work with the \[B_{\cdot}^n := \frac{X_{\lfloor n \cdot \rfloor}}{n^{1/2}}\;.\] instead of its interpolated version $\Hat{B}_{t}^n$ in \eqref{eq:B} which is continuous in $[0, \infty)$.  
More generally, 
in order to simplify writing and notation, if we have a sequence of c\`adl\`ag processes with values in $\mathbb{R}^m$, for $m \ge 2$, of the form $\Sigma^{n}_t = \Sigma_{\lfloor n t \rfloor}$, we denote by $\Hat{\Sigma}^{n}_t$ its linearly interpolated version, i.e., 
\begin{equation*}
\Hat{\Sigma}^{n}_t := \Sigma_{\lfloor n t \rfloor} + (nt - \lfloor nt \rfloor)(\Sigma_{\lfloor nt \rfloor + 1} - \Sigma_{\lfloor nt \rfloor}) \,, \ t\ge 0 \,,
\end{equation*}
which is a random element of $C_{\Rs^m}[0, \infty)$. Moreover by ~\cite[Proposition 10.4, Chapter 3]{ethier2009markov} if we have convergence in distribution in the Skorohod space of $\{\Sigma_{\cdot}^n\}_{n\ge 1}$ to a continuous process, then we also have convergence in distribution in $C_{\Rs^m}[0, \infty)$ of $\{\Hat{\Sigma}_{\cdot}^n\}_{n\geq 1}$ to the same limit. 

\begin{remark}\label{rem:conver}
 In the proofs, we will often make use of the following facts:
 \begin{itemize}
     \item[a)] If a sequence of processes converges in probability with respect to the uniform norm in $C_{\Rs^m}[0, T]$ for all $T >0$, then it converges in probability in $C_{\Rs^m}[0, \infty)$ under the metric $\rho$ defined in \eqref{def:rho} (this is a well-known result which can be easily proved).
     \item[b)] If a sequence of processes is tight in $C_{\Rs^m}[0, T]$ for all $T>0$ with the topology of uniform convergence in the compacts, then the sequence is tight in $C_{\Rs^m}[0, \infty)$ (see, e.g., \cite[Theorem 4.10,  Chapter 2]{karatzas2012brownian}). 
     \item[c)] \cite[Theorem 7.3]{billingsley1999probability}: A sequence of processes $\{\Hat{\Sigma}_{\cdot}^n\}_{n\geq 1}$ is tight in $C_{\Rs^m}[0, T]$  if and only if the following two conditions are satisfied:
     \begin{enumerate}[1.]
         \item For each positive $\eta$ there exist  $a$ and $n_0$ such that 
         \[ \PP\big[|\Hat{\Sigma}_{0}^n|\geq a\big]\leq \eta\,, \quad \text{ for all $n\geq n_0$}\,.
         \]
         \item For each positive $\varepsilon$ and $\eta$ there exist $\delta\in (0,1)$ and $n_0$ such that
         \[
         \PP\Big[\sup_{|s-t|\leq\delta}|\Hat{\Sigma}_{s}^n-\Hat{\Sigma}_{t}^n|\geq \varepsilon\Big]\leq \eta\,,  \quad \text{ for all $n\geq n_0$} \text{ and } \forall t \le T\,.
         \]
     \end{enumerate}
 \end{itemize}
\end{remark}


\subsection{Case $\beta>1/2$ (easy one); proof of Proposition~\ref{pn-WGERW-Gauss}}\label{prova-pn-WGERW}

\hfill \\

Recall that for the $p_n$-\Name*, the variables $\{U_i\}_{i \geq 1}$ are independent of  both sequences $\{\gamma_i\}_{ i\geq 1}$ and $\{\xi_i\}_{i \geq 1}$.

We start stating a result that will be used in the proof of Proposition~\ref{pn-WGERW-Gauss}. 

\begin{lemma}\label{lem: tgD}
Let $X$ be a $p_n$-\Name* 
in direction $\ell\in \mathbb{S}^{d-1}$, on $\ZZ^d$ with $d\geq 2$, $p_n= \mathcal{C}n^{-\beta} \wedge 1$ with $\beta > 1/2$. Define  
\begin{equation*}
D_{\lfloor nt \rfloor} := \frac{1}{n^{1/2}} \sum_{i=1}^{\lfloor nt \rfloor} \um_{E_{i-1}^c \cap \{ U_i \leq  \mathcal{C} i^{-\beta}\}} (\gamma_i - \xi_i), \ t\ge 0 \,.
\end{equation*}
Then $\{\Hat{D}^n_\cdot\}_{n\ge 1}$ as a sequence of random elements of $C_{\Rs^d}[0, \infty)$,  converges in probability to the zero function.
\end{lemma}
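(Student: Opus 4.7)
By Remark~\ref{rem:conver}(a), it is enough to show that $\sup_{t\in[0,T]}\|\hat D^n_t\|\to 0$ in probability for every $T>0$. Since the interpolated process $\hat D^n$ is piecewise linear between the lattice values $D_k$ and $D_0=0$, the triangle inequality gives the deterministic domination
\begin{equation*}
\sup_{t\in[0,T]}\|\hat D^n_t\|\;\le\;\sum_{i=1}^{\lceil nT\rceil+1}\|D_i-D_{i-1}\|\;=\;\frac{1}{n^{1/2}}\sum_{i=1}^{\lceil nT\rceil+1}\um_{E_{i-1}^c\cap\{U_i\le \mathcal{C} i^{-\beta}\}}\,\|\gamma_i-\xi_i\|\,.
\end{equation*}
Dropping the harmless factor $\um_{E_{i-1}^c}\le 1$, we shall bound the expectation of the right-hand side and then apply Markov's inequality.

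The plan for the expectation bound is to exploit that $U_i$, being uniform on $[0,1]$ and (in the model) effectively independent of the pair $(\xi_i,\gamma_i)$, produces the factorization $\mathbb{E}\!\left[\um_{\{U_i\le \mathcal{C} i^{-\beta}\}}\|\gamma_i-\xi_i\|\right]=\mathcal{C} i^{-\beta}\,\mathbb{E}[\|\gamma_i-\xi_i\|]$. Then Condition~\ref{condiçaoI*} applied to both $\xi_i$ and $\gamma_i$ yields, for any fixed $\theta<\beta-1/2$, a constant $K_\theta>0$ such that $\mathbb{E}[\|\gamma_i-\xi_i\|]\le K_\theta\,i^\theta$ for all $i\ge 1$. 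Combining,
\begin{equation*}
\mathbb{E}\!\left[\sup_{t\in[0,T]}\|\hat D^n_t\|\right]\;\le\;\frac{\mathcal{C}K_\theta}{n^{1/2}}\sum_{i=1}^{\lceil nT\rceil+1} i^{-\beta+\theta}\,.
\end{equation*}
The exponent satisfies $-\beta+\theta<-1/2$. If $-\beta+\theta<-1$, the series is summable and the right-hand side is $O(n^{-1/2})$; otherwise the partial sum is of order $n^{1-\beta+\theta}$ (or $\log n$ in the borderline case) and the right-hand side is of order $T^{1-\beta+\theta}n^{1/2-\beta+\theta}$, which still tends to $0$ since $\theta<\beta-1/2$. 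In both regimes, Markov's inequality yields $\sup_{t\in[0,T]}\|\hat D^n_t\|\to 0$ in probability for every $T>0$, completing the proof.

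The main obstacle I expect is the factorization step: the paper only states that $\{U_i\}$ is \emph{uncorrelated} with $\{\xi_i\}$ and $\{\gamma_i\}$, which is formally weaker than independence. To justify the factorization rigorously I would either (i) read the model's ``uncorrelated'' as the natural independence of $U_i$ from the randomness generating $(\xi_i,\gamma_i)$ that is implicit in the construction of the walk, or (ii) work on the enlarged filtration $\mathcal{G}_i=\sigma(\mathcal{F}_i,U_1,\dots,U_i)$ with respect to which the martingale/drift assumptions are preserved and $U_{i+1}$ is independent of $\mathcal{G}_i$, which suffices for the product expectation. Once this technical point is addressed, the rest of the argument is just the elementary series estimate above.
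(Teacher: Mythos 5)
Your proposal is correct and follows essentially the same route as the paper: dominate the supremum by the full sum, apply Markov's inequality, factorize the $\um_{\{U_i\le \mathcal{C} i^{-\beta}\}}$ expectation, bound $\EE\|\gamma_i-\xi_i\|$ via Condition~\ref{condiçaoI*}, and sum a series with exponent $\theta-\beta<-1/2$. The only cosmetic difference is that the paper splits $D$ into a $\gamma$-part and a $\xi$-part and treats each separately, whereas you bound $\|\gamma_i-\xi_i\|$ directly; and your concern about the factorization step, while legitimate, applies equally to the paper's own proof, which performs the identical step $\EE\big[\um_{\{U_i\le i^{-\beta}\}}\|\gamma_i\|\big]=i^{-\beta}\EE\|\gamma_i\|$ without comment, so the enlarged-filtration fix you propose is the right way to make both arguments fully rigorous.
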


The proof of Lemma~\ref{lem: tgD} will be postponed at the end of this section.

\begin{proof}[Proof of Proposition~\ref{pn-WGERW-Gauss}]
Using~\eqref{xn-incremento2} we write 
\begin{equation}\label{p_n-WGERW_incrementos}
    B_t^n = \frac{1}{n^{1/2}}\sum_{i=1}^{\lfloor nt \rfloor} \xi_i + \frac{1}{n^{1/2}} \sum_{i=1}^{\lfloor nt \rfloor} \um_{E_{i-1}^c \cap \{ U_i \leq {\mathcal{C}} i^{-\beta}\}} (\gamma_i - \xi_i) \,.
\end{equation}
By Lemma~\ref{lem: tgD} the linear interpolation of the second term in~\eqref{p_n-WGERW_incrementos}  
converges in probability to the zero function (as a sequence of random elements of $C_{\Rs^d}[0, \infty)$). For the first term in~\eqref{p_n-WGERW_incrementos} we  use~\cite[Theorem 7.1.4, 7.1.1]{ethier2009markov} to obtain that
\begin{equation}\label{xi_i->Z}
    \Big\{ \frac{1}{n^{1/2}}\sum_{i=1}^{\lfloor nt \rfloor} \xi_i \Big\}_{t\ge 0} \xrightarrow[n \to \infty]{\mathcal{D}} \{ Z_{t} \}_{t\ge 0} \,,
\end{equation}
where $Z_{\cdot}$ denotes a Gaussian process with independent increments. Using Slutsky's Theorem (see~\cite[Theorem 11.4]{gut2005probability}) we finish the proof. 
\end{proof}


The proof of Corollary~\ref{pnESRW->BM}  (stating that the rescaled $p_n$-\Nametwo{} converges in distribution to a standard Brownian Motion) follows the proof of Proposition~\ref{pn-WGERW-Gauss} line by line.  The main difference is  in~\eqref{xi_i->Z} where instead of using ~\cite[Theorem 7.1.4]{ethier2009markov}, we can apply Donsker's Theorem, since the $\{\xi_i\}_{i\geq 1}$ corresponding to  $p_n$-\Nametwo{} are i.i.d. with zero-mean vector and finite covariance matrix (see, e.g., \cite[Theorem 8.2]{billingsley1999probability} or~\cite[Theorem 5.1.2]{ethier2009markov}).

\medskip
\begin{proof}[Proof of Lemma~\ref{lem: tgD}.] Without loss of generality, we shall assume $\CC = 1$. In light of Remark~\ref{rem:conver}-$a)$ it suffices to show convergence in $C_{\Rs^d}[0, T]$ for all $T>0$.   Let us then define 
\begin{equation*}
D_{\lfloor n t \rfloor}^{\gamma}:=\frac{1}{n^{1/2}}\sum_{i=1}^{\lfloor n t \rfloor} \um_{E_{i-1}^c}\um_{\{ U_i \leq i^{-\beta}\}} \gamma_i \,, \ t \ge 0,
\end{equation*}
and analogously $D_{\lfloor n\cdot \rfloor}^{\xi}$ replacing $\gamma_i$ by $\xi_i$ in each term of the sum.
We begin showing  that $\{\Hat{D}_\cdot ^{n,\gamma}\}_{n\ge 1}$ converges in probability to the  zero function in $C_{\Rs^d}[0, T]$ for all $T > 0$ (recall that $\Hat{D}_t ^{n,\gamma}$ is the linearly interpolated version of   $D_{\lfloor nt \rfloor}^{\gamma}$). Note that
\begin{eqnarray}\label{eq: Dgamma}
& & \PP \Big( \sup_{0 \le t \le T} \big\| \Hat{D}^{n,\gamma}_{t} \big\| > \varepsilon  \Big)  \le \PP \Big( \sup_{0 \le t \le T} \sum_{i=1}^{\lfloor nT \rfloor+1}  \big\| \um_{E_{i-1}^c \cap \{ U_i \leq i^{-\beta}\}} \gamma_i \big\| > \varepsilon n^{\frac{1}{2}} \Big) \nonumber
\\
& & \le \PP \Big( \sum_{i=1}^{\lfloor nT \rfloor +1} \big\| \um_{E_{i-1}^c \cap \{ U_i \leq i^{-\beta}\}} \gamma_i \big\| > \varepsilon n^{\frac{1}{2}} \Big) \le  \PP \Big( \sum_{i=1}^{\lfloor nT \rfloor +1} \big\| \um_{ \{ U_i \leq i^{-\beta}\}} \gamma_i \big\| > \varepsilon n^{\frac{1}{2}} \Big) \nonumber
\\
& & \le \frac{1}{n^{1/2}\varepsilon} \sum_{i=1}^{\lfloor nT \rfloor +1} \frac{1}{i^{\beta}} \EE\left[\left\| \gamma_i \right\| \right]  \leq \frac{1}{n^{1/2}\varepsilon}  \sum_{i=1}^{\lfloor nT \rfloor +1} \frac{\EE\left[\left\| \gamma_i \right\| \right]}{i^{\theta}} \times   \frac{1}{i^{\beta-\theta}}  \,.    
\end{eqnarray}  
By Condition~\ref{condiçaoI*}, we know that  for all $i \geq 1$ and all $\theta < \beta - 1/2$ there exists a positive constant $L$ such that 
$\EE\left[\left\| \gamma_i \right\| \right] \le i^{\theta}   L$. 
Going back to~\eqref{eq: Dgamma}, we get
\begin{align*}
\begin{split}
\PP & \Big( \sup_{0 \le t \le T} \big\| \Hat{D}^{n,\gamma}_{t} \big\| > \varepsilon  \Big) 
\leq \frac{L}{n^{1/2}\varepsilon} \sum_{i=1}^{\lfloor nT \rfloor + 1} \frac{1}{i^{\beta-\theta}} 
\\ 
& \leq \frac{L}{n^{1/2}\varepsilon} \times c' (\lfloor nT \rfloor +1)^{1-\beta+\theta}
 \leq \frac{Lc'}{\varepsilon} \times \frac{\lfloor nT \rfloor + 1}{n} \times \frac{n^{1/2}}{(\lfloor nT \rfloor+1)^{\beta-\theta}}
\end{split}
\end{align*}
for some $c' > 0$, since $\sum_{i=1}^{k} \frac{1}{i^{\beta}} = O(k^{1-\beta})$. Given that $\beta >1/2+\theta$, we obtain that $\{\Hat{D}^{n,\gamma}_{\cdot}\}_{n\ge 1}$ converges in probability (with respect to the uniform metric) to the  zero function for all $T >0$.

Using again Condition~\ref{condiçaoI*} and applying the same 
computations as above, we show that $\{\Hat{D}^{n,\xi}_{\cdot}\}_{n\ge 1}$ also converges  in probability (with respect to the uniform metric) to the  zero function for all $T >0$.
Therefore the same convergence also holds for $\{\Hat{D}^{n}_{\cdot} = \Hat{D}^{n,\gamma}_{\cdot} - \Hat{D}^{n,\xi}_{\cdot}\}_{n\ge 1}$. 
%
\end{proof}


\subsection{Case $\beta = 1/2$; proof of Theorem~\ref{thm:main-conv}}\label{sec:main-theorem}

\hfill \\

In the following, we present an important auxiliary result that will be useful in the proof of Theorem~\ref{thm:main-conv}. We still state and proof the result for any $\beta \le 1/2$ and we are only going to fix $\beta = 1/2$ in the proof of Theorem~\ref{thm:main-conv}. 

\begin{lemma}\label{Kn-convergence}
Take $d\geq 2$ and $p_n=\mathcal{C}n^{-\beta}\wedge 1$, $n\ge 1$, for $\beta \le 1/2$. Let $|K_n|$  as in~\eqref{eq: def Kn} and consider the corresponding sequence of continuous processes $\{|\Hat{K}^n_{\cdot}|/n^{1-\beta}\}_{n\geq 1}$. Assume that Hypothesis~\ref{hyp} holds. Then, $\{|\Hat{K}^n_{\cdot}|/n^{1-\beta}\}_{n\geq 1}$ converges in probability as random elements of $C_{\Rs}[0, \infty)$ to the deterministic function $$t \mapsto \mathcal{C} \frac{\pi_d \, t^{1-\beta}}{1-\beta}, \ \, t\ge 0\,.$$ 
\end{lemma}


The proof of Lemma~\ref{Kn-convergence} will be postponed to the end of this section.

\begin{proof}[Proof of Theorem~\ref{thm:main-conv}]
Without loss of generality, we  assume $\CC = 1$, i.e., $p_n=n^{-\beta}$. 
By~\eqref{xn-incremento2} and the definition of $|K_n|$, we can rewrite  the process $B_t^n$ as
\begin{align}\label{p_n-ERW_incrementos_d}
\begin{split}
B_t^n & 
= \frac{1}{n^{1/2}}\sum_{i=1}^{\lfloor nt \rfloor} \xi_i + \frac{1}{n^{1/2}} \sum_{i \in K_{\lfloor nt \rfloor}}  (\gamma_i - \xi_i) \,.
\end{split}
\end{align}
For the first term in \eqref{p_n-ERW_incrementos_d} we apply  \cite[Theorem 7.1.4]{ethier2009markov}
and  obtain that 
\begin{equation}\label{xi_i->W}
    \Big\{ \frac{1}{n^{1/2}}\sum_{i=1}^{\lfloor nt \rfloor} \xi_i \Big\}_{t\ge 0} \xrightarrow[n \to \infty]{\mathcal{D}} \{ W_{t} \}_{t\ge 0} \,,
\end{equation}
where $W_{\cdot}$ is a Brownian Motion  with zero-mean vector and covariance matrix $\EE[\xi_1 \xi_1^T]$. 
Now let us prove that 
\begin{equation}\label{eq:prob}
     \frac{1}{n^{1/2}} \sum_{i \in K_{\lfloor nt \rfloor}}  (\gamma_i - \xi_i)  \xrightarrow[n \to \infty]{} 2\pi_d \sqrt{t}\EE[\gamma_1] \, \text{ in probability}.
\end{equation}
First, observe that, for $s<t$
\begin{align*}
    \Big|\Big|\frac{1}{n^{1/2}} \sum_{i \in K_{\lfloor ns \rfloor}}  (\gamma_i - \xi_i) - \frac{1}{n^{1/2}} \sum_{i \in K_{\lfloor nt \rfloor}}  (\gamma_i - \xi_i) \Big|\Big| &= \frac{1}{n^{1/2}}\Big|\Big|\sum_{i = |K_{\lfloor ns\rfloor}|+ 1}^{|K_{\lfloor nt\rfloor}|} (\gamma_{\varphi_i} - \xi_{\varphi_i})\Big|\Big|\\
    &\le \frac{2K}{n^{1/2}} ||K_{\lfloor nt\rfloor} - K_{\lfloor ns\rfloor}||\,.
\end{align*}
Then by Lemma \ref{Kn-convergence} and Remark~\ref{rem:conver} (item $c$), we have that $\big\{\frac{1}{n^{1/2}} \sum_{i \in K_{\lfloor nt \rfloor}}  (\gamma_i - \xi_i): t\ge 0 \big\}$ is tight. Note that
\begin{equation*}
    \frac{1}{n^{1/2}} \sum_{i \in K_{\lfloor nt \rfloor}}  (\gamma_i - \xi_i) = 
\frac{|K_{\lfloor nt \rfloor}|}{n^{1/2}} \sum_{i \in K_{\lfloor nt \rfloor}} \frac{ (\gamma_i - \xi_i)}{|K_{\lfloor nt \rfloor}|}\, .
\end{equation*}
Under the assumption that Hypothesis~\ref{hyp} holds, Lemma~\ref{Kn-convergence} implies that  $|K_{\lfloor nt \rfloor}| \to \infty$ as $n \to \infty$ almost surely. Moreover, the sequence of random vectors $\{\gamma_{\varphi_i} -\xi_{\varphi_i}\}_{i \geq 1}$ is i.i.d. and has the same distribution of $\{\gamma_{i} -\xi_{i}\}_{i \geq 1}$, which is i.i.d. too (see Lemma~\ref{lem: iid}). Therefore,
\begin{equation}\label{eq:prob0}
   \sum_{i \in K_{\lfloor nt \rfloor}} \frac{ (\gamma_i - \xi_i)}{|K_{\lfloor nt \rfloor}|} = \sum_{i=1}^{|K_{\lfloor nt \rfloor}|} \frac{ (\gamma_{\varphi_i} - \xi_{\varphi_i})}{|K_{\lfloor nt \rfloor}|} \xrightarrow[n \to \infty]{} \EE[\gamma_1] \quad \text{ a.s.}\,.
\end{equation}
Then by Lemma~\ref{Kn-convergence} and Slutsky's Theorem (see, e.g., \cite[Theorem 11.4]{gut2005probability}) we obtain that
\begin{align*}
    &\Big(\frac{1}{n^{1/2}} \sum_{i \in K_{\lfloor nt_1 \rfloor}}  (\gamma_i - \xi_i),\dots,\frac{1}{n^{1/2}} \sum_{i \in K_{\lfloor nt_m \rfloor}}  (\gamma_i - \xi_i)\Big)
    \\
    & \qquad \qquad \qquad \qquad \xrightarrow[n \to \infty]{} \big(2\pi_d\sqrt{t_1}\EE[\gamma_1],\dots,2\pi_d\sqrt{t_1}\EE[\gamma_1]\big)\,,
\end{align*}
in probability, for any $0\le t_1,\le \dots\le t_m $. Hence we have that the process
$\big\{\frac{1}{n^{1/2}} \sum_{i \in K_{\lfloor nt \rfloor}}  (\gamma_i - \xi_i): t\ge 0 \big\}$
converges in distribution to the constant process $\{2\pi_d\sqrt{t}\mathbb{E}[\gamma_1]: t\ge 0\}$, which is equivalent to the convergence in probability. Then by \eqref{xi_i->W} and \eqref{eq:prob} we obtain Theorem~\ref{thm:main-conv}.
\end{proof}

\begin{remark}\label{main-ub1}
Recalling the discussion at the end of  Section~\ref{sec:model}, and assuming $\mathcal{C}=1$,  we now show how Equation~\eqref{prop:d2-23} can be proved without assuming Hypothesis~\ref{hyp}.
From \eqref{p_n-ERW_incrementos_d} and \eqref{eq:prob0} and observing that $\EE[\gamma_1\cdot \ell] \ge \lambda > 0$, Equation~\eqref{prop:d2-23} follows if, instead of the complete convergence result of Lemma~\ref{Kn-convergence}, we have that $\{\Hat{K}^n_{\cdot}/n^{1/2}\}_{n\geq 1}$ is tight and for any limit point
$\mathcal{H}_{\cdot}$
\begin{align}\label{main-ubeq1}
& 
\PP \left[\forall t \in [0,\infty):  \mathcal{H}_t \le 2\sqrt{t} \pi_d
\right] = 1\,.
\end{align}
This can be checked directly from the proof of Lemma~\ref{Kn-convergence} using only the upper bound in Proposition \ref{prop:RangeERW}, see Remark \ref{main-ub2}.
\end{remark}


\begin{proof}[Proof of Lemma~\ref{Kn-convergence}] 
Without loss of generality, we  assume $\CC = 1$, i.e., $p_n=n^{-\beta}$. By \eqref{eq:convK_n} in Proposition~\ref{conv-Kbeta}, for all $t> 0$ we have that 
\begin{align*}
\frac{|K_{\lfloor n t\rfloor}|}{n^{1-\beta}} & \xrightarrow[n \to \infty]{} \frac{\pi_d \, t^{1-\beta}}{1-\beta} \,, 
\end{align*}
in probability, i.e., the one-dimensional distributions converge in probability to a constant. Therefore the joint distributions corresponding to times $t_1, \ldots, t_m$ also converge in probability and we obtain the convergence in the sense of the finite-dimensional distributions for both the cádlág version  $|K_{\lfloor n \cdot\rfloor}|/n^{1-\beta}$ as well as for the linearly interpolated version $\Hat{K}^n_{\cdot}/n^{1-\beta}$.

By~\cite[Theorem 7.1]{billingsley1999probability},  to prove the claim it only remains to prove that the sequence of processes $\{\Hat{K}^n_{\cdot}/n^{1-\beta}\}_{n\geq 1}$ is tight in $C_{\Rs^d}[0, \infty)$. 
By  Remark~\ref{rem:conver}-$b)$  it suffices to show tightness in $C_{\Rs^d}[0, T]$ for all $T > 0$ and this is equivalent to show that the sequence of processes $\left\{\Hat{K}^n_{\cdot}/n^{1-\beta}\right\}_{n\geq 1}$  satisfies  the two conditions in  Remark~\ref{rem:conver}-$c)$.
Since $\Hat{K}^n_{0}/n^{1-\beta}\equiv 0$ for all $n \ge 1$ and therefore the first condition in Remark~\ref{rem:conver}-$c)$ is satisfied.
To prove that $\{\Hat{K}^n_\cdot /n^{1-\beta}\}_{n\ge 1}$ satisfies the second condition in Remark~\ref{rem:conver}-$c)$  we  use~\cite[Corollary on page 83]{billingsley1999probability} which states that the second condition of ~\cite[Theorem 7.3]{billingsley1999probability} holds if, for every positive $\varepsilon$ and $\eta$, there exists a $\phi \in (0,1)$, and an integer $n_0$ such that
\begin{equation}\label{eq:tight}
\frac{1}{\phi} \, \PP \Big[ \sup_{t \le s \le t + \phi} \big\|\Hat{K}^n_{s} - \Hat{K}^n_{t} \big\|  \ge \varepsilon n^{1-\beta} \Big]  \le \eta \quad \forall n \ge n_0 \text{ and } \forall t \le T\,.
\end{equation}
Note that 
\begin{align*}
\PP &\Big[ \sup_{t \le s \le t + \phi} \big\|\Hat{K}^n_{s} - \Hat{K}^n_{t} \big\|  \ge \varepsilon n^{1-\beta} \Big]  \le \PP \Big[ \sum_{i = \lfloor nt \rfloor }^{\lfloor n(t + \phi) \rfloor +1} \um_{\big\{U_i \le i^{-\beta} \big\}} \ge \varepsilon n^{1-\beta} \Big]
\\
&
\le \exp\Big(-\varepsilon n^{1-\beta}\Big)\EE\Big[\exp\Big( \sum_{i = \lfloor nt \rfloor}^{\lfloor n(t + \phi)\rfloor+1} \um_{ \{U_i \le i^{-\beta}\}} \Big) \Big]\,, 
\end{align*}
where in the last inequality we have used exponential Markov's inequality.
Continuing the computation  we obtain that
\begin{eqnarray}\label{eq: PnA}
\lefteqn{\frac{1}{\phi} \PP \Big[ \sup_{t \le s \le t + \phi} \big\|\Hat{K}^n_{s} - \Hat{K}^n_{t} \big\|  \ge \varepsilon \Big] \le \frac{1}{\phi} e^{-\varepsilon n^{1-\beta}} \prod_{i = \lfloor nt \rfloor}^{\lfloor n(t + \phi)\rfloor + 1} \EE\left[\exp\left(  \um_{ \{U_i \le i^{-\beta} \}} \right) \right]} \nonumber \\
& & = \frac{1}{\phi} e^{-\varepsilon n^{1-\beta}} \prod_{i = \lfloor nt \rfloor}^{\lfloor n(t + \phi)\rfloor + 1} \left(1+ \frac{e-1}{i^{\beta}} \right) \le \frac{1}{\phi} e^{-\varepsilon n^{1-\beta}} \prod_{i = \lfloor nt \rfloor}^{\lfloor n(t + \phi)\rfloor + 1} \exp\left(\frac{e-1}{i^{\beta}} \right) 
\nonumber \\
& & \le \frac{1}{\phi} \exp(-\varepsilon n^{1-\beta})\exp\left(\frac{(e-1) \big((n(t + \phi))^{(1-\beta)} - (nt)^{1-\beta} + 2\big)}{1-\beta} \right) \,,  \nonumber
\end{eqnarray}
where the last inequality above follows from noticing that 
\begin{align*}
\sum_{i = \lfloor nt \rfloor}^{\lfloor n(t + \phi)\rfloor + 1}\frac{1}{i^{\beta}} & \le \int_{nt - 1}^{n(t+\phi) + 1} x^{-\beta} dx \le \frac{(n(t + \phi))^{(1-\beta)} - (nt)^{1-\beta} + 2}{1-\beta} \,.
\end{align*}
Therefore, in order to show that \eqref{eq:tight}  holds, it remains to show that for every positive $\varepsilon$ and $\eta$,  there exists a $\phi \in (0,1)$, and an integer $n_0$ such that
\begin{equation}\label{exp<et}
\frac{1}{\phi} \exp(-\varepsilon n^{1-\beta})\exp\left(\frac{(e-1) \big(n^{1-\beta} ((t + \phi)^{(1-\beta)} - t^{1-\beta}) + 2\big)}{1-\beta} \right) \le \eta
\end{equation}
for all $n \ge n_0$.
We accomplish this by choosing $\phi \in (0,1)$ sufficiently small such that $((t + \phi)^{(1-\beta)} - t^{1-\beta}) < \epsilon(1-\beta)/2(e-1)$, for all $t \in [0,T]$. Then, for every $\eta>0$, choosing $n$ sufficiently large, we obtain that~\eqref{exp<et} is satisfied. 
Consequently, we have that~\eqref{eq:tight} is satisfied and thus $\{\Hat{K}^n_{\cdot}\}_{n \ge 1}$ is tight in $C_{\Rs^d}[0,T]$. 
\end{proof}

\begin{remark}\label{main-ub2}
Recall Remark \ref{main-ub1}. Using only the upper bound in Proposition \ref{prop:RangeERW}, we have that $\{\Hat{K}^n_{\cdot}/n^{1/2}\}_{n\geq 1}$ is tight exactly as in the proof of Lemma~\ref{Kn-convergence} above. To check that for every limit point $\mathcal{H}_{\cdot}$ of $\{\Hat{K}^n_{\cdot}/n^{1/2}\}_{n\geq 1}$
\eqref{main-ubeq1} holds, it is enough to use \eqref{eq:UppK_n} instead of \eqref{eq:convK_n} in Proposition~\ref{conv-Kbeta}. 
\end{remark}

\begin{remark} 
Consider $p_n=n^{-\beta}$, $n\ge 1$, for $\beta <1/2$ and suppose that Hypothesis~\ref{hyp} holds. Using Lemma~\ref{Kn-convergence} and proceeding as in the proof of Theorem~\ref{thm:main-conv}, we can show that $\big\{X_{\lfloor n t\rfloor}/n^{1-\beta}\big\}_{t\ge 0}$ in distribution to the deterministic function $t\mapsto \frac{\pi_d}{1-\beta}  t^{1-\beta}$. This reinforces the result of Theorem~\ref{thm:conv-Xbeta}. We leave the details to the reader.
\end{remark}

\medskip 
\paragraph{\bf Acknowledgements:}
We are grateful to Augusto Quadros Teixeira, Christophe Gallesco, Guilherme Ost, Luiz Renato Fontes, and Maria Eulalia Vares for their helpful feedback and suggestions.

\medskip

\paragraph{\bf Conflict of interest:} The authors have no competing interests to declare that are relevant to the content of this article.

\medskip

\paragraph{\bf Data Availability:} Data sharing is not applicable to this article as no datasets were generated or analyzed during the current study.

\medskip

\appendix

\setcounter{tocdepth}{\value{tocdepth}}%
  \addtocontents{toc}{\protect\setcounter{tocdepth}{1}}%
  
\section{}\label{sec:appendixC}

\begin{lemma}\label{lem: iid}
Let $\{\phi_n\}_{n \ge 1}$ be a sequence of i.i.d. random vectors on $\ZZ^d$, with $d \ge 2$ and $\{\kappa_n\}_{n \ge 1}$ an increasing sequence of $\FF_n$-predictable times defined on 
a probability space $(\Omega, \FF, \PP)$, where $\{\FF_n\}_{n\ge 1}$ is the filtration $\FF_0 = \{\emptyset, \Omega\}$, $\FF_n=\sigma(\phi_1, \dots, \phi_n)$, $n\ge 1$ and $\kappa_n < \infty$ for all $n$. Then we have that $\{\phi_{\kappa_n}\}_{n \ge 1}$ is i.i.d. and moreover $\phi_{\kappa_n}$ has the same distribution of $\phi_1$ for every $n \ge 1$. 
\end{lemma}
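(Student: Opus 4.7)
The plan is to establish the lemma by induction on $N$, proving that for all $N \geq 1$ and Borel sets $A_1, \ldots, A_N \subset \ZZ^d$,
\[
\PP\!\left[\phi_{\kappa_1} \in A_1, \ldots, \phi_{\kappa_N} \in A_N\right] = \prod_{j=1}^{N} \mu(A_j),
\]
where $\mu$ denotes the common law of the $\phi_i$. This factorization encodes both the i.i.d.\ property of $\{\phi_{\kappa_n}\}_{n\ge 1}$ and the identification of its marginal law, so it is the only thing that needs proving.

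First I would translate the predictability hypothesis into the form that actually gets used: since $\{\kappa_n\}$ is strictly increasing and $\FF_n$-predictable, for each $k \ge 1$ the level set $\{\kappa_n = k\}$ belongs to $\FF_{k-1}$. Combining this with the strict monotonicity $\kappa_1 < \cdots < \kappa_N$, the joint event $\bigcap_{j=1}^{N}\{\kappa_j = k_j\}$ lies in $\FF_{k_N-1}$ whenever $k_1 < \cdots < k_N$; and the random vectors $\phi_{k_1}, \ldots, \phi_{k_{N-1}}$ are $\FF_{k_N-1}$-measurable as well (because $k_j \le k_N - 1$). This sets up the key independence statement we will exploit: $\phi_{k_N}$ is independent of $\FF_{k_N-1}$ with law $\mu$.

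The inductive step then proceeds by decomposing
\[
\PP\!\left[\bigcap_{j=1}^{N}\{\phi_{\kappa_j} \in A_j\}\right] = \sum_{k_1 < \cdots < k_N} \PP\!\left[\bigcap_{j=1}^{N}\{\phi_{k_j} \in A_j,\, \kappa_j = k_j\}\right],
\]
factoring out $\PP[\phi_{k_N} \in A_N] = \mu(A_N)$ in each summand via the independence just established, and then summing the innermost index $k_N > k_{N-1}$ to marginalize out the indicator $\um_{\{\kappa_N = k_N\}}$. What survives is exactly $\mu(A_N)\,\PP[\phi_{\kappa_1} \in A_1, \ldots, \phi_{\kappa_{N-1}} \in A_{N-1}]$, to which the induction hypothesis applies. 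The base case $N=1$ is a one-line variant of the same independence argument: $\PP[\phi_{\kappa_1} \in A_1] = \sum_k \PP[\phi_k \in A_1]\PP[\kappa_1 = k] = \mu(A_1)$.

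The main technical point to handle carefully is the passage from the random $\sigma$-algebra ``$\FF_{\kappa_N - 1}$'' to the deterministic $\FF_{k_N-1}$ used inside each summand: one must verify that after restricting to $\{\kappa_N = k_N\}$, \emph{both} the constraints $\{\kappa_j = k_j\}$ for $j<N$ \emph{and} the conditions $\{\phi_{k_j} \in A_j\}$ for $j<N$ genuinely sit in $\FF_{k_N-1}$. This is where the predictability of each $\kappa_j$ and the strict monotonicity are used in tandem; once it is in place, the i.i.d.\ structure of $\{\phi_n\}$ mechanically closes the induction.
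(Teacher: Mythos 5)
Your proposal is correct and relies on exactly the same mechanism as the paper: decompose over the deterministic values of the $\kappa_j$, use predictability to place all constraints indexed below the largest time $k_N$ (together with $\{\kappa_N = k_N\}$) inside $\FF_{k_N-1}$, factor out $\mu(A_N)$ by independence of $\phi_{k_N}$ from $\FF_{k_N-1}$, and marginalize the largest index. The only presentational difference is that the paper proves the marginal law and pairwise independence separately and then gestures at induction for the general case, whereas you package the whole statement as a single clean induction on $N$ — a slightly tidier write-up of the same argument.
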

\begin{proof}
We begin showing that for any $n \ge 1$, $\phi_{\kappa_n}$ has the same distribution of $\phi_1$.
Let $A$ be a subset of   $\ZZ^d$ and fix a $j \ge 1$. Then we have that
\begin{equation}\label{eq: iid1}
\begin{split}
& \PP[\phi_{\kappa_j} \in A | \FF_{\kappa_{j}-1}] = \sum_{n=1}^{\infty} \PP[\{ \phi_{\kappa_j} \in A \} \cap \{\kappa_j = n\}| \FF_{\kappa_{j}-1}]
\\
& = \sum_{n=1}^{\infty} \PP[\phi_{\kappa_j} \in A |\{\kappa_j = n\}, \FF_{\kappa_{j}-1}] \PP[\kappa_j = n| \FF_{\kappa_{j}-1}] \\
& = \sum_{n=1}^{\infty} \PP[\phi_n \in A ]\PP[\kappa_j = n| \FF_{\kappa_{j}-1}] = \PP[\phi_1 \in A ] \underbrace{\sum_{n=1}^{\infty} \PP[\kappa_j = n| \FF_{\kappa_{j}-1}]}_{=1}\,.
\end{split}    
\end{equation}
The third equality in~\eqref{eq: iid1} follows from  the fact that $\{\kappa_n\}_{n \ge 1}$ is an increasing sequence of $\FF_n$-predictable times, $\{\phi_n\}_{n \ge 1}$ is i.i.d. and so $\phi_n$ is independent of $\FF_{n-1}$ for all $n \ge 1$.
It remains to prove independence. We will only prove pairwise independence, but it is straightforward to generalize the proof by induction and we leave the details to the reader.  For $B$ and $D$ subsets of $\ZZ^d$ and $j, i \in \mathbb{N}$ such that $j > i$ we have that 
\begin{equation}
\label{eq: iid2}
\begin{split}
\PP[&\{\phi_{\kappa_i} \in B\} \cap \{\phi_{\kappa_j} \in D\}] = 
\\ 
&= \sum_{n=1}^{\infty} \sum_{m > n} \PP[\{\{\phi_{\kappa_i} \in B\} \cap \{\kappa_i = n\}\} \cap \{\{\phi_{\kappa_j} \in D\} \cap \{\kappa_j = m\}\}] 
\\
&= \sum_{n=1}^{\infty} \sum_{m > n} \PP[\phi_{\kappa_j} \in D | \kappa_i = n, \kappa_j = m, \phi_{n} \in B]\PP[\kappa_i = n, \kappa_j = m, \phi_{\kappa_i} \in B] 
\\
&= \sum_{n=1}^{\infty} \sum_{m > n} \PP[\phi_m \in D]\PP[\kappa_i = n, \kappa_j = m, \phi_{\kappa_i} \in B]\,.
\end{split}
\end{equation}
The last equality in~\eqref{eq: iid2} follows from the fact that the event $\{\kappa_i = n, \kappa_j = m, \phi_{n} \in B\}$, for $m>n$, is $\FF_{m-1}$ measurable, since $\{\kappa_n\}_{n \ge 1}$ is an increasing sequence of $\FF_n$-predictable times.

Since the sequence $\{ \phi_n\}_{n \ge 1}$ is i.i.d., we can continue the computation in~\eqref{eq: iid2} and obtain that
\begin{equation*}
\begin{split}
\PP[\{\phi_{\kappa_i} \in B\} \cap \{\phi_{\kappa_j} \in D\}] & = \PP[\phi_1 \in D] \sum_{n=1}^{\infty} \sum_{m \ge n} \PP[\kappa_j = m,\kappa_i = n, \phi_{\kappa_i} \in B]
\\
& = \PP[\phi_1 \in D] \sum_{n=1}^{\infty}  \PP[\kappa_i = n, \phi_{\kappa_i} \in B] 
\\
& = \PP[\phi_1 \in D] \sum_{n=1}^{\infty} \PP[\phi_{n} \in B]\PP[\kappa_i = n]
\\
& = \PP[\phi_1 \in D] \PP[\phi_1 \in B] = \PP[\phi_{\kappa_i} \in D]\PP[\phi_{\kappa_j} \in B]\,,
\end{split}    
\end{equation*}
concluding the proof.
\end{proof}

Now we present the proof of Lemma~\ref{lemma_1}.
\begin{proof}[Proof of Lemma~\ref{lemma_1}] 
For simplicity, we denote $b_k = \lfloor k^{\delta}\rfloor$. Recall that $\delta<\beta$. From the definition of the set $A_{k, \delta, \beta}$ in \eqref{eq:set_A} we have that  there exists a positive constant $c_1$, { which does not depend on $\beta$,} such that
\begin{equation}\label{eq:n1}
\mathbb{P} (A_{k, \delta, \beta}^c) \leq 1 - \Big(1 - \frac{1}{k^{\beta}}\Big)^{b_k} \le c_1 \frac{b_k}{k^{\beta}} \,.
\end{equation}
Hence,  $\lim_{n\to\infty}\frac{1}{n}\sum_{k = 1}^{n}\mathbb{P}(A_{k,\delta, \beta}^c) = 0$ for all $\delta \in (0,\beta)$.
Then, to prove the result it is enough to show that 
\begin{equation*}
\lim_{n\to\infty} \frac{1}{n} \sum_{k = 1}^n \Big(\um_{A_{k, \delta, \beta}^c} - \mathbb{P}(A_{k, \delta, \beta}^c)\Big) = 0, \, \text{ a.s.}\,.
\end{equation*}
By Borel-Cantelli's Lemma the latter holds if can show that 
\begin{equation}\label{eq_2}
\sum_{n\ge 1}\mathbb{E}\left[\left(\frac{1}{n}\sum_{k = 1}^n\Big(\um_{A_{k, \delta,\beta}^c} - \mathbb{P}\left(A_{k, \delta, \beta}^c\right)\Big)\right)^2\right] < \infty \,,
\end{equation}
holds for any $\delta \in (0,\beta)$. 
To avoid clutter, henceforth we will denote $A_k := A_{k, \delta, \beta}$.  Note that
\begin{equation}\label{eq:divide}
\begin{split}
\mathbb{E}\Big[\Big(\frac{1}{n}\sum_{k = 1}^{n}[\um_{A_k^c} - \mathbb{P}(A_k^c)]\Big)^2\Big] &=
 \frac{1}{n^2}\sum_{k=1}^{n}\mathbb{E}\left[(\um_{A_k^c} - \mathbb{P}(A_k^c))^2\right]
 \\
+& \frac{2}{n^2}\sum_{k = 1}^{n - 1}\sum_{m = k + 1}^{n}\mathbb{E}\left[(\um_{A_k^c} - \mathbb{P}(A_k^c))(\um_{A_m^c} - \mathbb{P}(A_m^c))\right] \,.
\end{split}
\end{equation}
For the first on the RHS of \eqref{eq:divide}, it holds that 
\begin{equation}\label{eq:n2}
\begin{split}
& \frac{1}{n^2}\sum_{k=1}^{n}\mathbb{E}\left[\left(\um_{A_k^c} - \mathbb{P}(A_k^c)\right)^2\right]  \le \frac{1}{n^2}\sum_{k = 1}^{n}\mathbb{P}(A_k^c) 
\le \frac{c_1}{n^2}\sum_{k = 1}^n\frac{k^{\delta}}{k^{\beta}} \le \frac{c_2}{n^{1 + {\beta} - \delta}}\,,
\end{split}
\end{equation}
where,  $c_2$ is a positive constant and in the second inequality we used~\eqref{eq:n1}. 
Since $\delta \in (0, \beta)$, by~\eqref{eq:n2} we obtain
\begin{equation*}
\sum_{n = 1}^{\infty}\frac{1}{n^2} \sum_{k=1}^{n}\mathbb{E}\left[\left(\um_{A_k^c} - \mathbb{P}(A_k^c)\right)^2\right] < \infty\,.
\end{equation*}

For the second sum on the RHS of \eqref{eq:divide}, since the $\{U_i\}_{i \ge 1}$ are independent,   we have that  $\mathbb{E}[(\um_{A_k^c} - \mathbb{P}(A_k^c))(\um_{A_m^c} - \mathbb{P}(A_m^c))] = 0$ for $m > k + b_k$. Therefore,  we obtain that 
\begin{align*}
& \frac{1}{n^2}\sum_{k = 1}^{n - 1}\sum_{m = k + 1}^{n}\mathbb{E}\left[\left(\um_{A_k^c} - \mathbb{P}(A_k^c)\right)\left(\um_{A_m^c} - \mathbb{P}(A_m^c)\right)\right] = 
\\
& \frac{1}{n^2}\sum_{k = 1}^{n - 1}\;\;\sum_{m = k + 1}^{n\wedge(k + b_k)}\Big(\mathbb{P}\left(A_k^c\cap A_m^c\right) - \mathbb{P}(A_k^c)\mathbb{P}(A_m^c)\Big) \,.
\end{align*}

Recalling the definition of $A_k$ in \eqref{eq:set_A}, for $m\leq k +b_k$ we have that 
%
\begin{align*}
A_k\cap A_m^c & = \bigcap_{n = 0}^{b_k}\left\{U_{n + k} > \frac{1}{(k+n)^{{\beta}}}\right\}\cap \;\bigcup_{l = 0}^{b_m}\left\{U_{l + m} \le \frac{1}{(m+l)^{{\beta}}}\right\}
\\
& = \bigcap_{n = 0}^{b_k}\left\{U_{n + k} > \frac{1}{(k+n)^{{\beta}}}\right\}\cap \;\bigcup_{l = b_k + k - m + 1}^{b_m}\left\{U_{l + m} \le \frac{1}{(m+l)^{{\beta}}}\right\}\,.
\end{align*}
Setting  $E_{k,m, \beta} := \displaystyle\bigcup_{l =  b_k + k - m + 1}^{b_m}\left\{U_{l + m} \le \frac{1}{(m+l)^{{\beta}}}\right\}$, and noticing that $$E_{k,m, \beta} \subseteq A_{m}^c \, ,$$ together with the fact that 
\begin{equation*}
\mathbb{P}\left(A_k^c\cap A_m^c\right) = \mathbb{P}\left(A_m^c\right) - \mathbb{P}\left(A_k\cap A_m^c\right) = \mathbb{P}\left(A_m^c\right) - \mathbb{P}\left(A_k\right)\mathbb{P}\left(E_{k,m, \beta}\right)\,,
\end{equation*}
we obtain that 
\begin{align*}
& \mathbb{P}\left(A_k^c\cap A_m^c\right) - \mathbb{P}\left(A_k^c\right)\mathbb{P}\left(A_m^c\right)\\
&= \mathbb{P}\left(A_m^c\right) - \mathbb{P}\left(A_k\right)\mathbb{P}\left(E_{k,m,\beta}\right) - \mathbb{P}\left(A_k^c\right)\mathbb{P}\left(A_m^c\right) 
\\
& 
= \mathbb{P}\left(A_k\right)\left[\mathbb{P}\left(A_m^c\right) - \mathbb{P}\left(E_{k,m,\beta}\right)\right] \\
&= \mathbb{P}\left(A_k\right)\mathbb{P}\left(A_m^c\setminus E_{k,m,\beta}\right)
\\
& = \mathbb{P}\left(A_k\right) \mathbb{P}\Big(\bigcup_{l = 0}^{b_k + k - m}\Big\{U_{l + m} \le \frac{1}{(m+l)^{{\beta}}}\Big\}\Big)
\\
& \leq \mathbb{P}\left(A_k\right)\Big[ 1 - \left(1 - \frac{1}{m^{\beta}}\right)^{b_k + k - m} \Big] 
{\leq c_3 \frac{b_m + k - m}{m^{\beta}}} \,,
\end{align*}

for some constant $c_3 > 0$. Then
\begin{align*}
& \sum_{m = k + 1}^{n\wedge(k + b_k)}\Big(\mathbb{P}\left(A_k^c\cap A_m^c\right) - \mathbb{P}(A_k^c)\mathbb{P}(A_m^c)\Big) \le c_3 \sum_{m = k + 1}^{k + b_k}\frac{b_k + k - m}{m^{\beta}} 
\\
& =  c_3 \sum_{m =1}^{b_k}\frac{b_k - m}{(m + k)^{\beta}} 
=  \frac{c_3}{{b_k}^{\beta}}\sum_{m =1}^{b_k}\frac{1 - \frac{m}{b_k}}{\big(\frac{m}{b_k} + \frac{k}{b_k}\big)^{\beta}}\frac{1}{b_k} \le  \frac{c_3}{{b_k}^{\beta}}\sum_{m =1}^{b_k}\frac{1 - \frac{m}{b_k}}{\big(\frac{m}{b_k} + 1\big)^{\beta}}\frac{1}{b_k}\,.
\end{align*}
Using that 
\begin{equation*}
\lim_{k\to\infty}\sum_{m =1}^{b_k}\frac{1 - \frac{m}{b_k}}{\big(\frac{m}{b_k} + 1\big)^{\beta}}\frac{1}{b_k} = \int_{0}^{1}\frac{1 - x}{(1 + x)^{\beta}}dx\,, 
\end{equation*}
we conclude that  there exists some positive constant $c_4$ such that 
\begin{equation*}
\sum_{m = k + 1}^{n\wedge(k + b_k)}\Big(\mathbb{P}\left(A_k^c\cap A_m^c\right) - \mathbb{P}(A_k^c)\mathbb{P}(A_m^c)\Big) \le c_4\frac{1}{{b_k}^{\beta}}\,,
\end{equation*}
which, in turn,  implies that 
\begin{equation*}
\begin{split}
& \sum_{n = 1}^{\infty}\frac{1}{n^2}\sum_{k = 1}^{n - 1}\sum_{m = k + 1}^{n}\mathbb{E}\left[\left(\um_{A_k^c} - \mathbb{P}(A_k^c)\right)\left(\um_{A_m^c} - \mathbb{P}(A_m^c)\right)\right]
\\
&  \le c_4 \sum_{n = 1}^{\infty} \frac{1}{n^2}\sum_{k = 1}^{n}\frac{1}{{b_k}^{\beta}}  \le c_5 \sum_{n = 1}^{\infty} \frac{1}{n^{1 + \delta{\beta}}} < \infty \,.
\end{split}
\end{equation*}
\end{proof}

The next result is a general result for a sum of geometric random variables which will be important in the proof of Proposition~\ref{prop:RangeERW_lower}.

\begin{lemma}\label{lem:geo_sum_bound}
For every $k\geq 1$, let $\{H^{k}_j\}_{j \ge 1}$ be a sequence of independent random variables such that for each $ j \ge 1$, $H^{k}_j\sim {\rm Geo} (1/(k+j)^\beta)$ with $\beta >0$. Then, for every  $\lambda<  1$ there exists a constant $C_\lambda>0$ depending on $\lambda$ only such that for any $m\geq 1$ it holds that 
\begin{equation*}
  P\Big(\sum_{j=1}^m H^{k}_j\leq \lambda \, \frac{m^{1+\beta}}{1+\beta}\Big)\leq \exp{-\frac{C_{\lambda}}{1+\beta} \cdot m}\,.  
\end{equation*}
\end{lemma}
\begin{proof}
The proof relies on Theorem~3.1 in \cite{JANSON20181} and a simple estimate on $E\big[\sum_{j=1}^m H^{k}_j\big]$. Firstly, by Theorem~3.1 in \cite{JANSON20181},  for every $k\geq 1$ and every $\lambda\in (0,1)$, it holds that 
\[
  P\Big(\sum_{j=1}^m H^{k}_j\leq \lambda E\big[\sum_{j=1}^m H^{k}_j\big] \Big)\leq \exp{- \frac{1}{(k +m)^{\beta}} E\big[\sum_{j=1}^m H^{k}_j\big] (\lambda -1 - \log \lambda)}\,.
\]
Taking $C_\lambda:=\lambda -1 - \log \lambda>0$ for $\lambda \in (0,1)$ and using that, 
\[
E\big[\sum_{j=1}^m H^{k}_j\big] \geq \int_0^m (k+x)^{\beta}dx=\frac{(k+m)^{1+\beta}}{1+\beta}\,, 
\]
the claim easily follows.
\end{proof}




\bibliographystyle{abbrv}
\bibliography{referencias}
 
 \end{document}